\title{Expansion, Random Walks and Sieving in $SL_2 (\mathbb{F}_p [t])$}
\author{Henry Bradford \\ University of Oxford}
\date{}
\newtheorem{thm}{Theorem}[section]
\newtheorem{lem}[thm]{Lemma}
\newtheorem{propn}[thm]{Proposition}
\newtheorem{coroll}[thm]{Corollary}
\newtheorem{defn}[thm]{Definition}
\newtheorem{ex}[thm]{Example}
\newtheorem{rmrk}[thm]{Remark}
\newtheorem{qu}[thm]{Question}
\DeclareMathOperator{\Ad}{Ad}
\DeclareMathOperator{\Cay}{Cay}
\DeclareMathOperator{\ccl}{ccl}
\DeclareMathOperator{\girth}{girth}
\DeclareMathOperator{\tr}{tr}
\begin{document}

\maketitle

\begin{abstract}
We construct new examples of expander Cayley graphs of finite groups, 
arising as congruence quotients of non-elementary subgroups of $SL_2 (\mathbb{F}_p [t])$ 
modulo certain square-free ideals. 
We describe some applications of our results to simple random walks on such subgroups, 
specifically giving bounds on the rate of escape of such walks from algebraic subvarieties, 
the set of squares and the set of elements with reducible characteristic polynomial in $SL_2 (\mathbb{F}_p [t])$. 
\end{abstract}

\section{Introduction}

The past few years have seen major developments in tools for constructing expanders 
as congruence images of linear groups. 
This programme was begun by the breakthrough paper of Bourgain and Gamburd \cite{BoGa1}, 
who studied expander congruence quotients of non-elementary subgroups of $SL_2 (\mathbb{Z})$, 
but their approach was subsequently extended by many authors \cite{BrGrGuTa}, \cite{SaGoVar}. 
We now have a fairly satisfying picture of the phenomenon of superstrong approximation 
(as it has come to be known) in linear groups over $\mathbb{Q}$. 
These results on expanders have in turn shed new light on problems in the geometry and analysis of infinite linear groups. 

In spite of the major strides forward that have been made in this theory, 
it is notable that the work of recent years has focused entirely on the characteristic zero case, 
with a theory of expansion in linear groups over fields of positive characteristic remaining largely undeveloped. 
In this paper we commence the development such a theory. 

\subsection{Statement of Results}

Fix a prime $p \geq 3$. Our main results concern the escape of random walks on $SL_2 (\mathbb{F}_p [t])$ from subsets $X$ of various types. All of the escape results are proved by the same broad strategy: 
an upper bound for the probability of the random walk lying in $X$ is given by 
the probability of the random walk on a congruence quotient lying in the image of $X$. 
This in turn may be bounded above in terms of the \emph{size} of the image of $X$, by our results on expander congruence quotients. Our first result on random walks demonstrates exponentially fast escape from proper algebraic subvarieties. 

\begin{thm} \label{sieveupstairs}
Let $S \subseteq SL_2 (\mathbb{F}_p [t])$ be a finite symmetric subset, 
generating a non-elementary subgroup. 
Let $F : \mathbb{M}_2 (\mathbb{F}_p [t])^r \rightarrow \mathbb{F}_p [t]$ 
be a polynomial over $\mathbb{F}_p [t]$ which does not vanish on $SL_2 (\mathbb{F}_p [t])^r$. 
Then there exist $C_1 (F) , C_2 (S) > 0$ such that, 
letting $V(F) \subseteq \mathbb{M}_2 (\mathbb{F}_p [t])^r$ 
be the affine algebraic subvariety of $SL_2 (\mathbb{F}_p [t])^r$ defined by $F$, 
\begin{center}
$(\times_{i=1} ^r \mu_S ^{(l)})(V(F)) \leq C_1 e^{- C_2 l}$. 
\end{center}
\end{thm}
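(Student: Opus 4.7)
The plan is to follow the sieve-style strategy outlined in the introduction, proceeding by induction on $r$ with the case $r=1$ as the core.

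For $r=1$, I would seek a sequence of square-free ideals $(I_l) \subseteq \mathbb{F}_p[t]$ such that: (i) $|SL_2(\mathbb{F}_p[t]/I_l)|$ grows like $e^{\alpha l}$; (ii) the Cayley graph of $SL_2(\mathbb{F}_p[t]/I_l)$ with respect to the image of $S$ is an expander, drawing on the paper's main expansion theorem for congruence quotients modulo square-free ideals; and (iii) the image of $V(F)$ in $SL_2(\mathbb{F}_p[t]/I_l)$ has cardinality at most $|SL_2(\mathbb{F}_p[t]/I_l)|^{1-\epsilon}$ for some $\epsilon>0$ depending only on $\dim V(F)<\dim SL_2=3$, via the Chinese Remainder Theorem together with a Lang--Weil / Schwartz--Zippel bound in each prime factor. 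The uniform spectral gap from (ii) implies that after $l$ steps the projected random walk is within $L^2$ distance $e^{-\beta l}$ of the uniform measure on the quotient, so the probability of lying in the image of $V(F)$ is at most $|SL_2(\mathbb{F}_p[t]/I_l)|^{-\epsilon} + e^{-\beta l}$, which is $\leq C_1 e^{-C_2 l}$ after optimising the rate at which $I_l$ is enlarged.

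For the inductive step from $r-1$ to $r$, I would set $W=\{(g_1,\ldots,g_{r-1}) : F(g_1,\ldots,g_{r-1},\cdot) \text{ vanishes identically on } SL_2(\mathbb{F}_p[t])\}$. Since $F$ does not vanish on $SL_2^r$, $W$ is a proper subvariety of $SL_2^{r-1}$, cut out by polynomials whose degrees are bounded in terms of $\deg F$ alone. The inductive hypothesis gives that the first $r-1$ walks lie in $W$ with probability $\leq C' e^{-C'' l}$. Conditional on avoiding $W$, the $r$-th walk must hit a proper subvariety of $SL_2$ defined by a polynomial whose coefficients depend on $(g_1,\ldots,g_{r-1})$ but whose degree is uniformly bounded, so the constants $C_1,C_2$ supplied by the $r=1$ case may be chosen uniformly in the conditioning, and the estimates compose.

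The main obstacle is carrying out step (iii) uniformly: for extraneous reasons $F$ may vanish modulo certain primes $q \in \mathbb{F}_p[t]$, so the product defining $I_l$ must be pruned to those $q$ for which $F \bmod q$ still cuts out a proper subvariety of $SL_2(\mathbb{F}_p[t]/(q))$. What must be verified is that the density of such \emph{good} primes (weighted by degree) is bounded below, so that $|SL_2(\mathbb{F}_p[t]/I_l)|$ continues to grow at the required exponential rate while (ii) still applies. This reduces to an effective Nullstellensatz statement over $\mathbb{F}_p[t]$ combined with the uniformity of the spectral gap along the resulting subsequence of square-free ideals, both of which should be accessible from the material already established in the paper.
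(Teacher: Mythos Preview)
Your overall strategy is in the right spirit, but the paper's argument is far simpler, and your inductive step contains a genuine uniformity gap.

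The paper does not induct on $r$ at all, and uses only reduction modulo a \emph{single irreducible} $f_i$ of degree $n_i$ (square-free products are not needed for this theorem; they enter only in Proposition~\ref{bigsieve}, for sets whose images have positive density). Since $F$ does not vanish on $SL_2(\mathbb{F}_p[t])^r$, pick a point where it is nonzero; its value there is a fixed nonzero element of $\mathbb{F}_p[t]$, so for all $n_i$ larger than a constant depending only on $F$ the reduced polynomial still cuts out a proper subvariety of $SL_2(\mathbb{F}_{p^{n_i}})^r$. Schwarz--Zippel (Theorem~\ref{schwarzzippel}) then gives $\lvert \pi_{f_i}(V(F))\rvert \ll_F p^{(3r-1)n_i} \ll p^{-n_i}\lvert Q_{n_i}\rvert^r$, and Proposition~\ref{smallsieve}, which is already formulated for the product measure on $Q_{n_i}^r$, finishes the proof in one line. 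Your ``main obstacle'' of bad primes is thus a finite set, absorbed into $C_1(F)$.

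The real problem is your inductive step. You argue that because the conditional polynomial $G=F(g_1,\ldots,g_{r-1},\cdot)$ has bounded degree in the entries of $g_r$, the $r=1$ constants apply uniformly. The degree bound does make the Schwarz--Zippel constant uniform, but $C_1$ in the $r=1$ case also depends on the threshold beyond which $\bar G$ is not identically zero on $SL_2(\mathbb{F}_{p^n})$, and that threshold is governed by the $t$-degrees of the \emph{coefficients} of $G$, not by its degree in the matrix entries. After $l$ steps the entries of $g_1,\ldots,g_{r-1}$ have $t$-degree up to $Dl$, so those coefficients have $t$-degree $\asymp l$, and $\bar G$ may vanish identically modulo irreducibles of degree $\asymp l$ --- exactly the range you must use. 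Concretely, $\bar G\equiv 0$ on $SL_2(\mathbb{F}_p[t]/(q))$ precisely when $(\pi_q(g_1),\ldots,\pi_q(g_{r-1}))$ lies in the reduction $\bar W$ of your locus $W$, and this is \emph{not} excluded by $(g_1,\ldots,g_{r-1})\notin W$. The natural repair is to condition instead on $(\pi_q(g_j))_j\notin\bar W$ for a prime $q$ fixed in advance; but that amounts to reducing all $r$ coordinates modulo the same $q$ simultaneously, which is exactly the paper's direct route and renders the induction superfluous.
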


Here $\mu_S ^{(l)}$ is the $l$th convolution power of the measure $\mu_S$ 
and $\times_{i=1} ^r \mu_S ^{(l)}$ is the product measure. 
We define these notions precisely below. 

Second, we turn to proper powers in $SL_2 (\mathbb{F}_p [t])$. In the characteristic zero case, 
a very general non-concentration result was provided by Lubotzky and Meiri \cite{LuMe}. 
In positive characteristic we have: 

\begin{thm} \label{squaresarerare}
Let $S$ be as in Theorem \ref{sieveupstairs}. 
There exist $C_1 , C_2 > 0$ such that: 
\begin{center} 
$\mu_S ^{(l)} (\lbrace g \in SL_2 (\mathbb{F}_p [t]) : 
g=h^2 \text{ for some } h \in SL_2 (\mathbb{F}_p [t]) \rbrace) \leq C_1 e^{-C_2 \sqrt{l/\log(l)}}$. 
\end{center}
\end{thm}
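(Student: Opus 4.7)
My plan is a function field analogue of the Bourgain--Gamburd--Sarnak / Lubotzky--Meiri large sieve, with the expansion result of this paper (for square-free moduli) supplying the necessary spectral gap.

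The first step is a uniform density bound for squares in the prime congruence quotients. I claim that for any irreducible $f \in \mathbb{F}_p[t]$, the density of squares in $SL_2(\mathbb{F}_p[t]/(f))$ is bounded above by some $\rho < 1$, independently of $\deg f$. This follows from the trace identity: if $g = h^2$ in $SL_2(\mathbb{F}_q)$ with $g$ regular semisimple and split, then $\tr(g) + 2 = \tr(h)^2$, so $\tr(g) + 2$ must be a square in $\mathbb{F}_q$; the non-split case gives a parallel quadratic condition in $\mathbb{F}_{q^2}$. Since at most half of $\mathbb{F}_q$ consists of squares and regular semisimple elements make up almost all of $SL_2(\mathbb{F}_q)$, one obtains the claimed uniform bound (using that $p \geq 3$).

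I would then set up the sieve. Fix parameters $k$ and $D$ to be optimised. Choose a set $\mathcal{P}$ of $k$ distinct irreducibles in $\mathbb{F}_p[t]$ of degree at most $D$; this is possible provided $k$ is at most the number of such irreducibles, roughly $p^D/D$. Set $Q = \prod_{f \in \mathcal{P}} f$ and let $\pi\colon SL_2(\mathbb{F}_p[t]) \to SL_2(\mathbb{F}_p[t]/(Q))$ be the reduction. By CRT,
\begin{equation*}
SL_2(\mathbb{F}_p[t]/(Q)) \cong \prod_{f \in \mathcal{P}} SL_2(\mathbb{F}_p[t]/(f)),
\end{equation*}
and the $\pi$-image of the set of squares lies in a set $X$ of density at most $\rho^k$, by the first step applied coordinatewise. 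The main expander theorem of the paper applied to this square-free modulus supplies a spectral gap $\gamma > 0$ uniform in $Q$, so a standard $\ell^2$ mixing estimate yields
\begin{equation*}
\mu_S^{(l)}(\{g : g = h^2\}) \leq (\pi_\ast \mu_S^{(l)})(X) \leq \rho^k + e^{-\gamma l}\,|SL_2(\mathbb{F}_p[t]/(Q))|^{1/2}.
\end{equation*}

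The stated rate $e^{-C_2 \sqrt{l/\log l}}$ emerges from optimising $k$ and $D$ against the two constraints: the mixing error has exponent $-\gamma l + O(kD\log p)$, while the number of primes is capped by $k \lesssim p^D/D$. The main obstacle, and the source of the square root, is that the naive $\rho^k$ bound (arising from Markov applied to $\Omega(g) = |\{f \in \mathcal{P} : \pi_f(g)\text{ is a square}\}|$, whose value is $k$ on squares) is not sharp enough alone; one instead works with higher moments of $\Omega$, which requires iterating the expansion estimate on products of several congruence quotients and carefully tracking the growth of the modulus. Balancing the resulting Chernoff-type large deviation bound against the mixing error, with $D \sim \log l$ and $k$ chosen as large as these constraints allow, gives the desired rate.
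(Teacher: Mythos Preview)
Your overall architecture is right, but there is a genuine gap in how you invoke the expansion input. The paper's expander theorem for square-free moduli (Theorem~\ref{mainthm}~(ii)) does \emph{not} apply to an arbitrary product of distinct irreducibles: it requires every irreducible factor to have \emph{prime} degree, no two factors to share the \emph{same} degree, and all degrees to be $M$-admissible. Your choice of $k$ irreducibles of degree at most $D$ with $k$ as large as $p^D/D$ forces many repeated degrees, and the spectral gap is simply not available for such $Q$ (this is precisely the open problem discussed in the paper's introduction). Once you respect the hypotheses, you may take at most one irreducible of each prime degree in $(M,D]$, so $k \leq \pi(D)-\pi(M) \asymp D/\log D$, and $\deg Q \asymp \sum_{q\leq D,\ q\text{ prime}} q \asymp D^2/\log D$. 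The mixing constraint then reads $l \gtrsim D^2/\log D$, hence $k \asymp \sqrt{l/\log l}$; this is exactly Proposition~\ref{bigsieve}, and the $\rho^k$ term alone already yields $e^{-C\sqrt{l/\log l}}$.

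In particular, your last paragraph misidentifies the bottleneck. The bound $\rho^k$ is not a Markov estimate on $\Omega$; it follows directly from the containment $\pi_Q(X)\subseteq \prod_{f\in\mathcal{P}}\{\text{squares in }SL_2(\mathbb{F}_p[t]/(f))\}$, whose uniform density is at most $\rho^k$ by the product structure. No higher moments or Chernoff-type arguments are needed, and nothing is lost at this step. The square root in the exponent arises entirely from the arithmetic constraint on admissible moduli imposed by Theorem~\ref{mainthm}~(ii), not from any weakness of the sieve inequality. (Your trace-based density bound for squares is correct and gives roughly $\rho\leq\tfrac12+o(1)$; the paper instead counts non-squares among conjugates of the diagonal torus to obtain $\gamma=\tfrac34$.)
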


\begin{rmrk}
More could be said about proper powers via the same method. 
We could, for instance, strengthen the proof of Theorem \ref{squaresarerare} 
to show that $\mu_S ^{(l)}$ escapes from the sets of $m$th powers in $SL_2 (\mathbb{F}_p [t])$, 
for all $m \in \mathbb{N}$ satisfying $p \equiv 1 \mod m$, simultaneously. 
However, absent an application, we shall not rehearse the details of such an argument. 
\end{rmrk}

Finally we prove a non-concentration estimate for elements with reducible characteristic polynomial. 

\begin{thm} \label{redcharpolysarerare}
Let $S$ be as in Theorem \ref{sieveupstairs}. There exist $C_1 , C_2 > 0$ such that: 
\begin{center}
$\mu_S ^{(l)} (\lbrace g \in SL_2 (\mathbb{F}_p [t]) : 
\chi_g \text{ is reducible } \rbrace) \leq C_1 e^{-C_2 \sqrt{l/\log(l)}}$. 
\end{center}
\end{thm}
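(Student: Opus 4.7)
The plan is to follow the sieve strategy used for Theorem \ref{squaresarerare}, modifying only the ``local'' input. The reformulation is that, for $g \in SL_2(\mathbb{F}_p[t])$, the characteristic polynomial $\chi_g(X) = X^2 - \tr(g)X + 1$ is reducible over $\mathbb{F}_p[t]$ if and only if the discriminant $\tr(g)^2 - 4$ is a square in $\mathbb{F}_p[t]$. Thus the set of interest is the preimage of the squares in $\mathbb{F}_p[t]$ under the trace-discriminant map $g \mapsto \tr(g)^2 - 4$, and essentially every ingredient from the proof of Theorem \ref{squaresarerare} carries over, with this map playing the role of the squaring map.

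The local input is as follows. For an irreducible $\pi \in \mathbb{F}_p[t]$ of degree $d$, reduction mod $\pi$ gives a homomorphism $SL_2(\mathbb{F}_p[t]) \rightarrow SL_2(k_\pi)$ with $k_\pi \cong \mathbb{F}_{p^d}$. If $\tr(g)^2 - 4$ is a square in $\mathbb{F}_p[t]$ then it is a square in $k_\pi$. A direct count (stratifying $SL_2(k_\pi)$ by trace, and using that exactly half of the nonzero elements of $k_\pi$ are squares) shows that the proportion of $h \in SL_2(k_\pi)$ with $\tr(h)^2 - 4$ a square in $k_\pi$ equals $\tfrac{1}{2} + O(p^{-d/2})$, hence is bounded above by some $\alpha < 1$ uniformly for $d$ sufficiently large.

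The global step combines these local bounds through the expander machinery for squarefree congruence quotients developed earlier in the paper, exactly as in the proof of Theorem \ref{squaresarerare}. For a squarefree product $\mathfrak{q} = \prod_{i=1}^{k} \pi_i$ of distinct irreducibles, each of degree $\asymp d$, the expansion result implies that the pushforward of $\mu_S^{(l)}$ to $SL_2(\mathbb{F}_p[t]/\mathfrak{q})$ is exponentially close to uniform once $l$ comfortably exceeds the mixing time, which is of order $kd$. A large-sieve-type inequality, exploiting quasi-orthogonality across the $\pi_i$, then yields an upper bound of the form $\alpha^k$ plus a spectral error term.

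Balancing parameters by taking $d \asymp \log l$ and $k \asymp \sqrt{l / \log l}$ produces the claimed bound $C_1 e^{-C_2 \sqrt{l/\log(l)}}$. I expect the main subtlety to be exactly the one already confronted in Theorem \ref{squaresarerare}: ensuring that the local density estimate is genuinely uniform in $\pi$ and that the sieve inequality meshes cleanly with the positive-characteristic expander bound. Since this is addressed in the proof of the earlier theorem, and the local input here is of comparable shape (a single codimension-one condition per prime, with density bounded away from one), no fundamentally new ingredient should be required beyond substituting the new local count.
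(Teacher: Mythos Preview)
Your approach is essentially the paper's: a local density bound at each irreducible $f_i$, followed by Proposition~\ref{bigsieve} exactly as in the proof of Theorem~\ref{squaresarerare}. Your discriminant reformulation and unified count (giving density $\tfrac{1}{2}+O(p^{-d})$ in $Q_{n_i}$) is, if anything, slightly cleaner than the paper's treatment, which splits off the case $\tr(g)=\pm 2$ and disposes of it separately via the subvariety escape of Example~\ref{subvarietyexamples} before applying Proposition~\ref{bigsieve} with any $\gamma>\tfrac{1}{2}$ to the remaining (diagonalisable) elements.

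One correction to your description of the global step: Proposition~\ref{bigsieve} is not a large-sieve inequality exploiting pairwise quasi-orthogonality across the $\pi_i$, but a direct equidistribution bound on the \emph{single} product quotient $\prod_i Q_{n_i}$, relying on the expander result Theorem~\ref{mainthm}(ii) for that full product. In particular the $n_i$ used there are the first $k\asymp\sqrt{l/\log l}$ primes above $M$, so the degrees range up to order $\sqrt{l\log l}$ and their sum is of order $l$; your stated choice ``each of degree $\asymp\log l$'' with $k\asymp\sqrt{l/\log l}$ factors would not furnish enough distinct prime degrees in so narrow a window. This does not affect the validity of the argument once you actually invoke Proposition~\ref{bigsieve} as you say you will, but the parameter heuristic and the ``quasi-orthogonality'' language you sketch do not match what that proposition does.
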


It is very likely that bounds on return probabilities of random walks to other subsets of $SL_2 (\mathbb{F}_p [t])$ may be proved by the same method, and we emphasize that our results are best viewed as sample, rather than an exhaustive list, of the applications of this theory. 

We now turn to our results on expanders. 

\begin{defn}
For $M > 0$, an integer $n>1$ will be called \emph{$M$-admissible} if $n$ has no prime factor less than $M$. 
A polynomial $f \in \mathbb{F}_p [t]$ will be called $M$-admissible 
if the degree of every irreducible factor of $f$ is a $M$-admissible integer. 
\end{defn}

\begin{ex} \label{smallsubfieldsap}
Let $M > 0$. 
\item[(i)] Every prime $> M$ is $M$-admissible. 

\item[(ii)] There is a sequence $(n_i)_i$ of $M$-admissible integers growing linearly in $i$. 
For, given $M$, let $\pi$ be the set of all primes up to $M$. 
Let $n_i = Ni + 1$, where $N = \prod_{P \in \pi} P$. 
It will be significant in the applications in Section \ref{sieve} 
that the set of admissible integers is sufficiently dense. 
\end{ex}

Our main result on constructing expanders is: 

\begin{thm} \label{mainthm}
Let $S \subseteq SL_2 (\mathbb{F}_p [t])$ be a finite symmetric subset, 
generating a non-elementary subgroup. 
Suppose every entry of every element of $S$ has degree at most $D$. 
Let $(f_i)_i \subseteq \mathbb{F}_p [t]$ be a sequence of distinct polynomials. 
Then there exists $M > 0$ (depending on $D$ and $p$) such that, 
if $(f_i)_i$ are $M$-admissible then for $i_0 \in \mathbb{N}$ sufficiently large (depending on $D,p$), 
$(SL_2 (\mathbb{F}_p [t] / (f_i)) , \pi_{f_i}(S))_{i \geq i_0}$ is a two-sided expander family, 
provided one of the following holds:  
\begin{itemize}
\item[(i)] The $f_i$ are irreducible. 

\item[(ii)] The $f_i$ are square-free, every irreducible factor of every $f_i$ has prime degree, 
and no two irreducible factors of any $f_i$ have the same degree. 

\end{itemize}
\end{thm}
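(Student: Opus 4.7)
\emph{Proof plan.} The plan is to verify the three standard inputs of the Bourgain--Gamburd $\ell^2$-flattening machine uniformly in $i$ on the quotients $G_i := SL_2(\mathbb{F}_p[t]/(f_i))$: (a) quasirandomness; (b) a product theorem for approximate subgroups; and (c) non-concentration of $(\pi_{f_i})_* \mu_S^{(l)}$ on proper subgroups at time $l \asymp \log|G_i|$. By the Chinese Remainder Theorem, $G_i \cong \prod_j SL_2(\mathbb{F}_{p^{d_{ij}}})$, where the $d_{ij}$ are the degrees of the irreducible factors of $f_i$: in case (i) there is a single factor, and in case (ii) a product indexed by distinct primes all $\geq M$, so the simple quotients $PSL_2(\mathbb{F}_{p^{d_{ij}}})$ are pairwise non-isomorphic.

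Inputs (a) and (b) are standard. Quasirandomness follows from the Landazuri--Seitz lower bound $(p^{d_{ij}}-1)/2$ on each quasisimple factor, transferred to the product via Goursat's lemma (clean in case (ii) by the distinct-prime hypothesis), giving minimum non-trivial representation dimension $\geq |G_i|^c$ for some fixed $c>0$. The product theorem is Helfgott--Pyber--Szab\'o on each factor combined with the semisimple product theorem of Salehi Golsefidy--Varj\'u (equivalently Breuillard--Green--Tao) on the product: any generating approximate subgroup $A \subseteq G_i$ with $|G_i|^\epsilon \leq |A| \leq |G_i|^{1-\epsilon}$ satisfies $|A^3| \geq |A|^{1+\delta}$ with $\delta = \delta(\epsilon) > 0$, uniformly in $i$.

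The main obstacle is input (c), and this is where the $M$-admissibility hypothesis enters. Choose $M$ larger than $D$ and larger than a $p$-dependent universal bound that kills the exceptional finite subgroups appearing in Dickson's classification of subgroups of $SL_2$ over a finite field. Every remaining proper subgroup of a factor $SL_2(\mathbb{F}_{p^{d_{ij}}})$ is then contained in a Borel, a torus normaliser, or a subfield copy $SL_2(\mathbb{F}_{p^e})$ with $e \mid d_{ij}$ and $e<d_{ij}$; admissibility forces $e \geq M$, whence the subfield subgroups have index $|G_i|^{\Omega(1)}$. In case (ii), Goursat's lemma combined with the distinct-prime hypothesis suppresses diagonal subgroups and reduces subgroup control to a single factor. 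Each remaining proper subgroup $H$ is cut out on $SL_2$ by a polynomial $F$ with $\mathbb{F}_p$-coefficients of uniformly bounded complexity, so
\[
\pi_{f_i}^{-1}(H) \subseteq V(F) \cup \{g \in SL_2(\mathbb{F}_p[t]) : F(g) \neq 0,\ f_i \mid F(g)\}.
\]
The entries of a length-$l$ word $w(s)$ in $S$ are polynomials in $t$ of degree $O(Dl)$, so $F(w(s))$ has degree $O(Dl)$ and cannot be divisible by $f_i$ unless it vanishes identically or $l \gtrsim \deg(f_i)/D \asymp \log|G_i|/D$. For $l$ just below this girth threshold the second set above is empty, while Theorem \ref{sieveupstairs} gives $\mu_S^{(l)}(V(F)) \leq C_1 e^{-C_2 l} \leq |G_i|^{-\tau}$ for some $\tau>0$ independent of $i$. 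Feeding (a), (b) and (c) into the standard Bourgain--Gamburd iteration produces the uniform spectral gap, hence two-sided expansion, for all sufficiently large $i$.
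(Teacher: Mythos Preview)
Your argument for input (c) has a genuine circularity. You invoke Theorem~\ref{sieveupstairs} to bound $\mu_S^{(l)}(V(F))$, but in this paper Theorem~\ref{sieveupstairs} is deduced from Proposition~\ref{smallsieve}, whose proof explicitly uses Theorem~\ref{mainthm} (to get the spectral gap that forces $\pi_{f_i}\mu_S^{(l)}$ close to uniform at time $l\asymp n_i$). So you are assuming the theorem you are trying to prove. Independently of this, the claim that each relevant proper subgroup $H\le SL_2(\mathbb{F}_{p^{d_{ij}}})$ is ``cut out on $SL_2$ by a polynomial $F$ with $\mathbb{F}_p$-coefficients of uniformly bounded complexity'' is not correct in the form you need: a generic Borel or torus normaliser is the stabiliser of a point or pair in $\mathbb{P}^1(\mathbb{F}_{p^{2d_{ij}}})$, and its defining equation has coefficients in $\mathbb{F}_{p^{d_{ij}}}$, not in $\mathbb{F}_p$; subfield subgroups are cut out by Frobenius conditions, not by a fixed $\mathbb{F}_p$-polynomial. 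Hence there is no single $F\in\mathbb{F}_p[t][x_{ij}]$ for which your decomposition $\pi_{f_i}^{-1}(H)\subseteq V(F)\cup\{g:f_i\mid F(g)\}$ makes sense. (Incidentally, admissibility gives $e\le d_{ij}/M$, not $e\ge M$; your stated conclusion about the index is nonetheless the right one.)

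The paper's route around this is to first pass, via the Uniform Tits Alternative (Theorem~\ref{uniformtits}), to a set $T\subseteq B_S(N)$ freely generating a free group, and then prove escape from subgroups for $T$ directly (Proposition~\ref{nonconpropn}) using only the girth bound of Lemma~\ref{girthlem} together with Kesten's theorem, with no appeal to Theorem~\ref{sieveupstairs}. Metabelian subgroups are handled not by a variety argument but by the group law $[[a,b],[c,d]]=1$: this forces $|H\cap B_T(2l)|\ll l^2$ purely combinatorially inside the free group (Lemma~\ref{noncontypes}(i)), while the bounded and subfield cases are handled by the crude estimate $\mu_T^{(2l)}(H)\le e^{-C_4 l}|H|$ (Lemma~\ref{noncontypes}(ii)). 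Expansion for $T$ is then transferred back to $S$ via Lemmas~\ref{biggersubgrp} and~\ref{indepgen}. Your treatment of inputs (a) and (b) is essentially the same as the paper's; the gap is entirely in (c).
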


We define the concept of a two-sided expander family in Section \ref{expanderssubsection} below. 
Here, and throughout, for $\mathbb{G}$ a linear algebraic group defined over $\mathbb{F}_p$ 
and $f \in \mathbb{F}_p [t]$, 
$\pi_f : \mathbb{G} (\mathbb{F}_p [t]) \rightarrow \mathbb{G} (\mathbb{F}_p [t] / (f))$ shall denote the congruence map. 

One of the keys to the proof of Theorem \ref{mainthm} shall be an analysis of Cayley graphs of large girth. 
For $G$ a finite group generated by a subset $S$, recall that the girth of $(G,S)$ 
is the length of the shortest non-trivial reduced word in $S$ which equals $1$ in $G$, 
or equivalently the length of the shortest non-trivial embedded loop in the Cayley graph $\Cay(G,S)$. 
In the course of our analysis, 
we also obtain the following result, which applies to generating sets which may not be congruence images of a fixed subset in $SL_2 (\mathbb{F}_p [t])$. 

\begin{thm} \label{girthexpanders}
For any $C_1 > 0$ and any $k \in \mathbb{N}$ with $k \geq 2$, 
there exists $C > 0$ (depending on $k,p$ and $C_1$) such that, 
if $(n_i)_i$ a sequence of $C$-admissible positive integers, 
$S_{n_i} \subseteq SL_2 (p^{n_i})$ is symmetric with $\lvert S_{n_i} \rvert = 2k$, 
and $\girth(SL_2 (p^{n_i}) , S_{n_i}) \geq C_1 n_i$, for all $i \in \mathbb{N}$, 
then for $i_0$ sufficiently large (depending on $C_1 , k$), 
$(SL_2 (p^{n_i}) , S_{n_i})_{i \geq i_0}$ is a two-sided expander family. 
\end{thm}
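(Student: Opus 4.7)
The plan is to apply the Bourgain--Gamburd machine to the sequence $G_i = SL_2(p^{n_i})$. The three ingredients required are: quasi-randomness of $G_i$; an initial $\ell^2$-bound on a moderate convolution power $\mu_{S_{n_i}}^{(l_0)}$; and a flattening mechanism that bootstraps this to $\|\mu_{S_{n_i}}^{(l)}\|_2 \leq |G_i|^{-1/2 + \eta}$ at scale $l$ a constant multiple of $n_i$, which combined with quasi-randomness yields a two-sided spectral gap uniform in $i$.

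Quasi-randomness is classical: by Frobenius, the minimal dimension of a non-trivial complex irreducible representation of $SL_2(p^n)$ is $(p^n - 1)/2$, which is comparable to $|G|^{1/3}$, so each $G_i$ is $|G_i|^{1/3 - o(1)}$-quasi-random. The initial $\ell^2$-bound comes directly from the girth hypothesis: for $l_0 = \lfloor C_1 n_i / 2 \rfloor$, the ball of radius $l_0$ in $\Cay(G_i, S_{n_i})$ is a tree, so standard bounds on return probabilities on the $2k$-regular tree give $\|\mu_{S_{n_i}}^{(l_0)}\|_2 \leq (2k-1)^{-l_0/2} \leq |G_i|^{-\kappa_0}$ for some $\kappa_0 = \kappa_0(C_1, k, p) > 0$.

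The flattening step iterates Helfgott's product theorem in $SL_2$ over finite fields: for any generating set $A \subseteq SL_2(q)$, either $|A| \geq |G|^{1-\delta}$, $A$ is contained in a proper subgroup, or $|A \cdot A \cdot A| \geq |A|^{1+\epsilon(\delta)}$. Combined with the non-commutative Balog--Szemer\'edi--Gowers lemma, this promotes the $\ell^2$-bound at scale $l_0$ to one of the form $|G_i|^{-1/2 + \eta}$ at a scale $l = O(l_0)$, provided we can rule out concentration of the walk on a proper subgroup of $G_i$ at each stage of the iteration.

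The main obstacle, and the step where the admissibility hypothesis is used, is this non-concentration on proper subgroups. By Dickson's classification, every proper subgroup of $SL_2(p^{n_i})$ either lies in a Borel (order $O(p^{2n_i})$), is the normalizer of a torus (order $O(p^{n_i})$), is a subfield subgroup of the form $SL_2(p^d) \cdot Z$ with $d \mid n_i$ and $d < n_i$, or is one of the exceptional $A_4, S_4, A_5$. The $C$-admissibility hypothesis forces every prime dividing $n_i$ to be at least $C$, so every proper divisor $d$ satisfies $d \leq n_i / C$, and every subfield subgroup therefore has order at most $p^{3 n_i / C} = |G_i|^{1/C + o(1)}$. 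For $C$ chosen large enough relative to $C_1$ and $k$, every proper subgroup has order at most $|G_i|^{1 - 2\kappa_0}$, and the tree-like behaviour inside the ball of radius $l_0$ then forces $\mu_{S_{n_i}}^{(l)}(gH) \leq |G_i|^{-\kappa'}$ uniformly over proper $H$, over $g \in G_i$, and over $l$ in the relevant range. This unlocks the Helfgott alternative at every step and closes the Bourgain--Gamburd loop, giving the desired two-sided expansion for $i \geq i_0$.
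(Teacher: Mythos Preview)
Your overall architecture matches the paper's: apply the Bourgain--Gamburd machine, with quasirandomness from Landazuri--Seitz, the product theorem for $SL_2(q)$ (Helfgott/Dinai), and escape from proper subgroups via Dickson's classification together with the girth hypothesis. The admissibility hypothesis is indeed used exactly where you say, to force subfield subgroups to be small.

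There is, however, a genuine gap in your treatment of the Borel subgroup (and, more generally, of the metabelian subgroups arising as point-stabilisers on $\mathbb{P}^1(\mathbb{F}_{p^{2n_i}})$). A Borel $B \leq SL_2(p^{n_i})$ has order $\asymp p^{2n_i} \asymp \lvert G_i \rvert^{2/3}$, independently of the admissibility constant $C$. Your claim that ``every proper subgroup has order at most $\lvert G_i \rvert^{1-2\kappa_0}$'' is therefore only compatible with the Borel case when $\kappa_0 \leq 1/6$; but then the only bound the tree picture gives you is
\[
\mu_{S_{n_i}}^{(l)}(gB) \;\leq\; \lvert B \rvert \cdot \lVert \mu_{S_{n_i}}^{(l)} \rVert_\infty \;\leq\; \lvert G_i \rvert^{2/3 - \kappa_0},
\]
(or the Cauchy--Schwarz variant with the $\ell^2$-norm, which yields the same exponent), and this is $\geq 1$ whenever $\kappa_0 < 2/3$. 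Since $\kappa_0$ is determined by the \emph{given} constant $C_1$ (which may be arbitrarily small) together with $k$ and $p$, you cannot arrange $\kappa_0 > 1/3$ in general, and the argument breaks down. The phrase ``tree-like behaviour forces $\mu(gH) \leq \lvert G_i \rvert^{-\kappa'}$'' hides exactly this failure.

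The paper closes this gap with a separate argument for the metabelian case (its Lemma on non-concentration types, part (i)): a metabelian group satisfies the law $[[a,b],[c,d]] = 1$, so any four elements of $B \cap B_{S_{n_i}}(2l)$ satisfy this identity; pulling back through the girth isomorphism to the free group $F_k$, one uses that centralisers in $F_k$ are cyclic to deduce $\lvert B \cap B_{S_{n_i}}(2l) \rvert = O(l^2)$, whence $\mu^{(2l)}(B) \ll l^2 e^{-C_4 l}$ decays exponentially. This is the missing idea; once it is in place, your size argument handles the remaining (subfield and exceptional) cases and the proof goes through.
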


The lower bound on girth is a natural hypothesis: for instance it is satisfied by \emph{generic} subsets of $SL_2 (p^n)$. Indeed large girth is a key component of the proof \cite{BrGrGuTa} that random pairs of generators in $SL_2 (p^n)$ yield expanders. 

\subsection{Expanders} \label{expanderssubsection}

Let $G$ be a finite group. For two functionals $\phi , \psi \in l^2 (G)$, 
the \emph{convolution} $\phi * \psi \in l^2 (G)$ is given by:
\begin{center}
$(\phi * \psi)(g) = \sum_{h \in G} \phi (h) \psi (h^{-1} g)$. 
\end{center}
For $l \in \mathbb{N}$, we define the \emph{convolution power} $\phi^{(l)}$ recursively via: 
\begin{center}
$\phi^{(0)} = \chi_e$; $\phi^{(l+1)} = \phi^{(l)} * \phi$.  
\end{center}
Here $\chi_e$ is the characteristic function of the identity element $e \in G$. 
For $S \subseteq G$ a symmetric subset, 
define  a linear operator $A_S : l^2 (G) \rightarrow l^2 (G)$ (called the \emph{adjacency operator}) by:
\begin{center}
$A_S (f) = (\frac{1}{\lvert S \rvert} \chi_S)  * f$. 
\end{center}
$A_S$ is self-adjoint of operator norm $1$; let its spectrum be: 
\begin{center}
$1 = \lambda_1 \geq \lambda_2 \geq \ldots \lambda_{\lvert G \rvert} \geq -1$
\end{center}
with the eigenvalue $\lambda_1 = 1$ corresponding to the constant functionals on $G$. 
More generally, the $1$-eigenspace of $A_S$ is generated by the indicator functions of 
the right cosets of $\langle S \rangle \leq G$. 
In particular $\lambda_1 > \lambda_2$ iff $S$ generates $G$. 
Let $l_0 ^2 (G) \leq l^2 (G)$ be the space of functions of mean zero on $G$ 
(that is, the orthogonal complement of the constant functions), 
and note that $l_0 ^2 (G)$ is preserved by $A_S$. 
Let $\rho = \max( \lvert \lambda_2 \rvert , \lvert \lambda_{\lvert G \rvert} \rvert )$, 
the norm of the restriction $A_S \mid_{l_0 ^2 (G)}$ 
in the Banach space $B(l_0 ^2 (G))$ of bounded linear operators on $l_0 ^2 (G)$. 

\begin{defn}
For $\epsilon > 0$, the pair $(G,S)$ is a (two-sided) \emph{$\epsilon$-expander} if $\rho \leq 1 - \epsilon$. 
A sequence $(G_n , S_n)_{n \in \mathbb{N}}$ is called an \emph{$\epsilon$-expander family} if $(G_n , S_n)$ 
is an $\epsilon$-expander for every $n \in \mathbb{N}$, or just an \emph{expander family} 
if there exists $\epsilon > 0$ such that $(G_n , S_n)_{n \in \mathbb{N}}$ is an $\epsilon$-expander family. 
\end{defn}

The two-sided version of expansion (also known as \emph{absolute expansion}) 
that we use here is stronger than the one-sided version which will be more familiar to many readers, 
and which is equivalent to the combinatorial notion of expansion defined in terms of the discrete Cheeger constant. 
For the most part, however, the distinction need not concern us, thanks to a recent result of Breuillard, 
Green, Guralnick and Tao \cite{BrGrGuTa}: 

\begin{thm} \label{1vs2sided}
For any $\epsilon > 0 , k \in \mathbb{N}$, there exists $\delta_{\epsilon , k} > 0$ such that, 
if $(G,S)$ is a one-sided $\epsilon$-expander with $\lvert S \rvert = k$, 
then one of the following holds: 
\begin{itemize}
\item[(i)] $(G,S)$ is a two-sided $\delta$-expander; 
\item[(ii)] There exists $H \leq G$ with $\lvert G : H \rvert = 2$ and $S \cap H = \emptyset$. 
\end{itemize}
\end{thm}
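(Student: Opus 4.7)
Since the hypothesis gives $\lambda_2(A_S) \leq 1-\epsilon$, and $\rho = \max(\lvert \lambda_2 \rvert, \lvert \lambda_{\lvert G \rvert} \rvert)$, the only way $(G,S)$ can fail to be a two-sided $\delta$-expander (assuming $\delta < \epsilon$) is for $\lambda_{\lvert G \rvert}(A_S)$ to lie in $[-1, -1+\delta)$, i.e., for $\Cay(G,S)$ to be (approximately) bipartite. The plan is thus to show that, for some $\delta = \delta(\epsilon, k) > 0$, the condition $\lambda_{\lvert G \rvert} \leq -1+\delta$ already forces the existence of an index-$2$ subgroup $H \leq G$ with $S \cap H = \emptyset$; that is, approximate bipartiteness must come from a genuine bipartite decomposition.

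I would first treat the exact case $\lambda_{\lvert G \rvert} = -1$. A unit $-1$-eigenvector $f$ satisfies $\frac{1}{\lvert S \rvert}\sum_{s \in S} f(sg) = -f(g)$ for every $g$. Applying $\lvert A_S f \rvert \leq A_S \lvert f \rvert$ pointwise and using that $A_S$ is doubly stochastic forces equality throughout, hence $A_S \lvert f \rvert = \lvert f \rvert$. Since $S$ generates $G$, $\lvert f \rvert$ must be constant, and tracing back, $f(sg) = -f(g)$ for all $s \in S$, $g \in G$. After normalizing $\lvert f \rvert \equiv 1$, the map $g \mapsto f(g)/f(e)$ is a homomorphism $G \to \{\pm 1\}$ whose kernel is the required index-$2$ subgroup $H$.

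For the quantitative step, let $f$ be a unit eigenvector with $A_S f = \mu f$ and $\mu \in (-1, -1+\delta]$. Then $\lVert A_S f + f \rVert_2 = \lvert 1 + \mu \rvert \leq \delta$, so in $L^2$ the values $\lbrace f(sg) : s \in S \rbrace$ cluster near $-f(g)$ for most $g$. Rounding to $\sigma := \operatorname{sgn}(f)$ yields a $\{\pm 1, 0\}$-valued function whose defect set
\begin{center}
$D := \lbrace (s,g) \in S \times G : \sigma(sg) \neq -\sigma(g) \rbrace$
\end{center}
has density controlled in terms of $\delta$ and $k$. The one-sided $\epsilon$-expansion provides a spectral gap for the random walk on $\Cay(G,S)$, which one uses to propagate the approximate anti-multiplicativity of $\sigma$ to exactness: for $\delta$ sufficiently small in terms of $\epsilon, k$, $D$ must in fact be empty, and the resulting homomorphism $\phi : G \to \{\pm 1\}$ has $\phi(S) = \{-1\}$.

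The main obstacle is this final stability step, converting an approximate $\mathbb{Z}/2$-homomorphism into an exact one. The dependence of $\delta$ on $k$ comes from union-bounding defects across the $k$ generators; the dependence on $\epsilon$ enters when invoking the spectral gap to rule out nontrivial defects. Morally the theorem is a rigidity statement: the spectrum of a non-bipartite Cayley graph of bounded valency $k$ with one-sided gap $\epsilon$ must stay quantitatively away from $-1$, with the size of this gap depending on both $k$ and $\epsilon$.
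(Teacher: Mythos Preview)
The paper does not prove Theorem~\ref{1vs2sided}; it is quoted as a result of Breuillard, Green, Guralnick and Tao \cite{BrGrGuTa} and invoked thereafter as a black box. There is therefore no argument in the paper against which to compare your attempt.

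Regarding the sketch itself: the reduction to the bottom of the spectrum and the treatment of the exact case $\lambda_{\lvert G\rvert}=-1$ are correct and standard. In the quantitative part, however, two genuine gaps remain. First, the assertion that rounding a unit eigenvector $f$ with eigenvalue $\mu\in(-1,-1+\delta]$ to $\sigma=\operatorname{sgn}(f)$ yields a small defect set is not justified by $\lVert A_S f+f\rVert_2\leq\delta$ alone: if $\lvert f\rvert$ is far from constant, signs can flip on a large set while contributing little to the $L^2$ error. One must first use the one-sided gap, applied to $\lvert f\rvert$ via the inequality $A_S\lvert f\rvert\geq\lvert A_S f\rvert=\lvert\mu\rvert\,\lvert f\rvert$, to show that $\lVert\,\lvert f\rvert-c\,\rVert_2^2\ll\delta/\epsilon$ for some constant $c$; only then does rounding behave as you claim. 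You invoke the one-sided gap later, but it is already needed here.

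Second, and more seriously, the step you yourself flag as ``the main obstacle'' --- upgrading a $\{\pm 1\}$-valued $\sigma$ with defect density $o(1)$ to an exact homomorphism $G\to\{\pm 1\}$ with $S$ mapped to $-1$ --- is asserted rather than argued. Small defect density does not by itself force the defect set to be empty, since ``small'' here means $o(1)$ while ``empty'' would require density below $1/(\lvert S\rvert\cdot\lvert G\rvert)$, and $\delta$ is not permitted to depend on $\lvert G\rvert$. Some further mechanism (for instance, showing that any nonempty defect set must occupy a definite proportion of $S\times G$ once the one-sided gap is present) is required, and this is precisely the substance of the theorem. As written, what you have is a reasonable plan of attack rather than a proof.
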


We recall some facts about expanders which will be used in what follows. 
Those readers more familiar with the one-sided version of expansion may note that these results about two-sided expanders follow from their one-sided analogues together with Theorem \ref{1vs2sided}. 
Note that condition (ii) of Theorem \ref{1vs2sided} is equivalent to $\Cay(G,S)$ being bipartite. 

\begin{lem} \label{indepgen}
Let $\Gamma$ be a finitely generated group; $(K_n)_n$ be a sequence of finite index subgroups of $\Gamma$; 
$\pi_n : \Gamma \twoheadrightarrow \Gamma / K_n$ be the natural epimorphism. 
Let $S,T \subseteq \Gamma$ be finite symmetric subsets, with $\langle S \rangle = \langle T \rangle = \Gamma$. 
Suppose $\Cay(\Gamma / K_n , \pi_n (T))$ is not bipartite, for all $n \in \mathbb{N}$. 
If $(\Gamma / K_n , \pi_n (S))_n$ is an expander family then so is $(\Gamma / K_n , \pi_n (T))_n$. 
\end{lem}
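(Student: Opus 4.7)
The plan is to reduce two-sided expansion for $T$ to one-sided expansion via Theorem \ref{1vs2sided}, and to transfer one-sided expansion from $S$ to $T$ by a Dirichlet form comparison.

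First, observe that two-sided $\epsilon$-expansion for $(\Gamma / K_n , \pi_n (S))_n$ trivially implies a uniform one-sided spectral gap $1 - \lambda_2 (A_{\pi_n (S)}) \geq \epsilon$ for all $n$.

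Second, I would establish the one-sided expansion of $(\Gamma / K_n , \pi_n (T))_n$ by comparison. Since $\Gamma = \langle S \rangle = \langle T \rangle$ is finitely generated, there is $L \in \mathbb{N}$ depending only on $S$ and $T$ such that every $s \in S$ admits an expression $s = t_1 \cdots t_L$ with $t_i \in T \cup \lbrace e \rbrace$. Writing the Dirichlet form
\begin{equation*}
\mathcal{E}_R (f) = \langle (I - A_R) f , f \rangle = \frac{1}{2 \lvert R \rvert} \sum_{g \in G, \, r \in R} \lvert f(rg) - f(g) \rvert^2,
\end{equation*}
telescoping $f(sg) - f(g)$ along such a factorization, then applying Cauchy--Schwarz and a change of variables, gives
\begin{equation*}
\mathcal{E}_{\pi_n (S)} (f) \leq L^2 \lvert T \rvert \cdot \mathcal{E}_{\pi_n (T)} (f) \qquad \text{for all } f \in l^2 (\Gamma / K_n),
\end{equation*}
uniformly in $n$. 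Combined with step one, this produces $1 - \lambda_2 (A_{\pi_n (T)}) \geq \epsilon / (L^2 \lvert T \rvert)$ uniformly, i.e.\ $(\Gamma / K_n , \pi_n (T))_n$ is a one-sided expander family.

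Third, I would invoke Theorem \ref{1vs2sided} on $(\Gamma / K_n , \pi_n (T))_n$: the hypothesis that $\Cay (\Gamma / K_n , \pi_n (T))$ is not bipartite for any $n$ rules out alternative (ii) uniformly, so alternative (i) supplies a uniform two-sided $\delta$-expansion constant depending only on the one-sided constant from step two and on $\lvert T \rvert$. The main obstacle is the Dirichlet form comparison in step two, which is where the finite generation of $\Gamma$ enters (ensuring $L$ is finite and independent of $n$); the rest is bookkeeping together with the already-stated Theorem \ref{1vs2sided}.
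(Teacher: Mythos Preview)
Your proposal is correct and follows precisely the route the paper indicates: the paper does not give a detailed proof of this lemma, but remarks that such results ``follow from their one-sided analogues together with Theorem \ref{1vs2sided}''. Your Dirichlet-form comparison is the standard proof of the one-sided analogue (independence of the one-sided spectral gap from the choice of generating set, up to constants), and your invocation of Theorem \ref{1vs2sided} using the non-bipartite hypothesis is exactly the upgrade to two-sided expansion the paper has in mind.
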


\begin{lem} \label{biggersubgrp}
Let $\Gamma$; $(K_n)_n$; $\pi_n$ be as in Lemma \ref{indepgen} 
and let $H \leq \Gamma$ be a finitely generated subgroup. 
Suppose $\pi_n (H) = \Gamma / K_n$ for all $n \in \mathbb{N}$. 
Let $S \subseteq \Gamma$, $T \subseteq H$ be finite symmetric subsets, 
with $\langle S \rangle = \Gamma$, $\langle T \rangle = H$. 
If $(\Gamma / K_n , \pi_n (T))_n$ is an expander family, 
and $\Cay(\Gamma / K_n , \pi_n (S))$ is not bipartite, 
then $(\Gamma / K_n , \pi_n (S))_n$ is an expander family. 
\end{lem}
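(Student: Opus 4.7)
The plan is to interpolate between the two generating sets via their union. Set $U := S \cup T$, a finite symmetric subset of $\Gamma$ with $\langle U \rangle = \Gamma$ (since $S$ already generates $\Gamma$). The strategy proceeds in two steps: first, establish that $(\Gamma/K_n, \pi_n(U))_n$ is a two-sided expander family; second, invoke Lemma \ref{indepgen} with the two generating sets $U$ and $S$ of $\Gamma$ to transfer expansion from $\pi_n(U)$ down to $\pi_n(S)$. The non-bipartite hypothesis on $\Cay(\Gamma/K_n, \pi_n(S))$ is precisely what is required for this final application.

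For the first step, I would observe that $\pi_n(U) \supseteq \pi_n(T)$ and argue that enlarging a generating set preserves the one-sided spectral gap. Concretely, decomposing
\[
A_{\pi_n(U)} = \tfrac{\lvert \pi_n(T) \rvert}{\lvert \pi_n(U) \rvert} A_{\pi_n(T)} + \tfrac{\lvert \pi_n(U) \setminus \pi_n(T) \rvert}{\lvert \pi_n(U) \rvert} A_{\pi_n(U) \setminus \pi_n(T)}
\]
and noting that each summand is self-adjoint of operator norm at most $1$, one obtains $\lambda_2 (A_{\pi_n(U)}) \leq 1 - (\lvert \pi_n(T) \rvert / \lvert \pi_n(U) \rvert)\epsilon$, where $\epsilon$ is the one-sided spectral gap of $(\Gamma/K_n, \pi_n(T))$ (a two-sided, hence one-sided, expander family by assumption). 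The ratio $\lvert \pi_n(T) \rvert / \lvert \pi_n(U) \rvert$ is uniformly bounded below by $1/\lvert U \rvert$, a fixed positive constant, so $(\Gamma/K_n, \pi_n(U))_n$ is a one-sided expander family. Moreover $\Cay(\Gamma/K_n, \pi_n(U))$ is non-bipartite, since any bipartition of it would restrict to a bipartition of its spanning subgraph $\Cay(\Gamma/K_n, \pi_n(S))$, contradicting the hypothesis. Theorem \ref{1vs2sided} then upgrades one-sided expansion to two-sided.

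Applying Lemma \ref{indepgen} with $U$ and $S$ generating $\Gamma$, and the non-bipartite hypothesis on $\pi_n(S)$ supplied by assumption, then completes the proof. The argument is essentially a soft consequence of the hypotheses together with Theorem \ref{1vs2sided} and Lemma \ref{indepgen}; the only subtlety worth flagging is the need for uniformity of the spectral gap constants in $n$, which is ensured because $U$ is a fixed finite subset of $\Gamma$ whose cardinality does not grow with $n$.
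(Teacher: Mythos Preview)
The paper does not actually prove Lemma \ref{biggersubgrp}; it is stated as a recalled fact, with only the remark that ``these results about two-sided expanders follow from their one-sided analogues together with Theorem \ref{1vs2sided}''. Your argument is correct and is precisely in the spirit of that remark: you establish the one-sided gap for the enlarged generating set $U=S\cup T$ via the convex-combination identity, upgrade to two-sided expansion using Theorem \ref{1vs2sided} (the non-bipartiteness of $\Cay(\Gamma/K_n,\pi_n(U))$ being inherited from that of $\Cay(\Gamma/K_n,\pi_n(S))$), and then invoke Lemma \ref{indepgen} to pass from $U$ to $S$.

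Two minor points worth noting, neither of which is a gap. First, in applying Theorem \ref{1vs2sided} you need $\lvert \pi_n(U)\rvert$ fixed, whereas a priori it may vary with $n$; but since it takes only finitely many values (all at most $\lvert U\rvert$), one simply takes the minimum of the finitely many constants $\delta_{\epsilon',k}$ involved. Second, your lower bound $\lvert \pi_n(T)\rvert/\lvert \pi_n(U)\rvert \geq 1/\lvert U\rvert$ is the right uniformity statement; one could equally note $\lvert \pi_n(T)\rvert/\lvert \pi_n(U)\rvert \geq \lvert \pi_n(T)\rvert/(\lvert \pi_n(T)\rvert+\lvert S\rvert) \geq 1/(1+\lvert S\rvert)$, but either suffices.
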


In all cases in which we shall use these results, 
the finite groups concerned shall have no subgroups of index two, 
so the associated Cayley graphs shall never be bipartite. 

The expander property for the pair $(G,S)$ provides a logarithmic bound 
for the mixing times of random walks on $G$, 
given by a probability measure supported on $S$. 
This will be useful in the applications in Section \ref{sieve}. 

\begin{lem} \label{mixingbound}
For any $l \in \mathbb{N}$; $g , h \in G$, 
\begin{center}
$\lvert \langle A_S ^l \chi_g , \chi_h \rangle - \frac{1}{\lvert G \rvert} \rvert \leq  \rho^l$. 
\end{center}
\end{lem}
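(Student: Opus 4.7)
The plan is to decompose $l^2(G)$ orthogonally as the line of constant functions plus $l_0^2(G)$, and exploit the fact that $A_S$ preserves both summands with a very different spectral behaviour on each.

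Let $P : l^2(G) \to l^2(G)$ denote the orthogonal projection onto the constants; explicitly $P\phi = \frac{1}{\lvert G \rvert}(\sum_{x} \phi(x)) \mathbf{1}$, where $\mathbf{1}$ is the constant function $1$. Since $A_S$ fixes $\mathbf{1}$ and preserves the orthogonal complement $l_0^2(G)$, the operator $A_S$ commutes with $P$, and for any $\phi \in l^2(G)$ we have $A_S^l \phi = P\phi + A_S^l(I-P)\phi$, with the second summand lying in $l_0^2(G)$ and satisfying $\|A_S^l(I-P)\phi\|_2 \leq \rho^l \|(I-P)\phi\|_2$ by the definition of $\rho$.

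Applying this to $\phi = \chi_g$ and pairing against $\chi_h$, the self-adjointness of $P$ and the orthogonality of the decomposition give
\begin{center}
$\langle A_S^l \chi_g , \chi_h \rangle = \langle P\chi_g , P\chi_h \rangle + \langle A_S^l (I-P)\chi_g , (I-P)\chi_h \rangle$.
\end{center}
The first term equals $\langle \frac{1}{\lvert G \rvert} \mathbf{1} , \frac{1}{\lvert G \rvert} \mathbf{1}\rangle = \frac{1}{\lvert G \rvert}$, which is exactly the quantity being subtracted on the left-hand side of the claimed inequality.

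It remains to bound the second term. By Cauchy--Schwarz and the spectral estimate above,
\begin{center}
$\lvert \langle A_S^l (I-P)\chi_g , (I-P)\chi_h \rangle \rvert \leq \rho^l \lVert (I-P)\chi_g \rVert_2 \lVert (I-P)\chi_h \rVert_2$.
\end{center}
Since $\lVert (I-P)\chi_g \rVert_2^2 = \lVert \chi_g \rVert_2^2 - \lVert P\chi_g \rVert_2^2 = 1 - \tfrac{1}{\lvert G \rvert} \leq 1$, and similarly for $\chi_h$, the product of norms is at most $1$, and the desired bound $\rho^l$ follows. There is no real obstacle here; the only point requiring care is the decomposition of $\chi_g$, whose norm $1$ is split almost entirely into the $l_0^2(G)$ component, so that one cannot afford to lose any factor larger than $1$ in the Cauchy--Schwarz step.
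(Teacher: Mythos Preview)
Your argument is correct and is precisely the standard spectral proof: split $\chi_g$ along the orthogonal decomposition $l^2(G) = \mathbb{C}\mathbf{1} \oplus l_0^2(G)$, observe that the constant component contributes exactly $1/\lvert G\rvert$ to the inner product, and bound the remaining term by Cauchy--Schwarz together with the operator norm $\rho$ of $A_S\mid_{l_0^2(G)}$. Every step is justified, and the final norm computation $\lVert (I-P)\chi_g\rVert_2^2 = 1 - 1/\lvert G\rvert \leq 1$ is exactly what is needed.

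There is nothing to compare against: the paper records this lemma among a list of ``facts about expanders which will be used in what follows'' and does not supply a proof. Your write-up is the proof one would expect a reader to reconstruct.
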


\subsection{The Bourgain-Gamburd Machine}

In \cite{BoGa1} Bourgain and Gamburd developed a new method for constructing expanders, 
exploiting results from additive combinatorics. Much has been written about the possible formulations of the Bourgain-Gamburd Machine \cite{BreSurvey}, \cite{TaoBook}, so we shall not rehearse the details of the proof, and give only a rough sketch of the most salient points. 

The Machine tells us that the expansion of a pair $(G,S)$ may be guaranteed by three hypotheses. 
The first is that $G$ should be highly \emph{quasirandom}, meaning that $G$ has no 
small-dimensional non-trivial complex representations. 
There are good classical estimates of quasirandomness for many familiar families of finite groups, 
including finite simple groups of Lie type. 
The combinatorics of quasirandom groups was studied by Gowers \cite{Gowers}, 
who coined the term, but its connection with expansion was first noted by Sarnak and Xue \cite{SarXue}. 
Suppose $A_S$ has an eigenvalue $\lambda$ of modulus close to $1$, so that the expansion is weak. 
The eigenspace of $\lambda$ is a subrepresentation of the right-regular representation of $G$, 
so by quasirandomness has large dimension. This places a lower bound on $\tr(A_S ^{2l})$. 

Proving expansion therefore reduces to showing that $\tr(A_S ^{2l})$ decays quickly. 
Sufficient conditions for such decay come from the non-commutative 
Balog-Szemer\'{e}di-Gowers Theorem, due to Tao \cite{TaoBSG}, which tells us that 
if decay fails, the measure $\mu_S ^{(2l)}$ must concentrate somewhat 
on a small \emph{approximate subgroup} $A$ of $G$ 
(that is, a symmetric subset containing $1$ such that $AA$ is covered by a small number of translates of $A$). 
Some papers utilising the Bourgain-Gamburd Machine have tackled the problem of excluding this possibility head-on, 
but we can reduce the problem still further if $G$ satisfies a \emph{product theorem}. 
All finite groups have some obvious approximate subgroups: if $A$ is already almost the whole of $G$, 
or is almost a proper subgroup of $G$, then $A$ will not grow much under multiplication with itself. 
A product theorem says roughly that these are the only possibilities. 
Through much work by many authors in the past decade, 
product theorems are now known for many families of groups, 
including finite (quasi)simple groups of Lie type of fixed rank \cite{BrGrTa}, \cite{PySz}. 

Expansion is thereby reduced to showing that $\mu_S ^{(2l)}$ escapes quickly from proper subgroups of $G$. 
It should be noted that this is the only point at which the set $S$ enters the argument. 
We therefore usually expect some information about the geometry of $S$ to be crucially involved in 
proving escape from subgroups. 

The general form of the Bourgain-Gamburd Machine admits many, related but distinct, formulations. 
The version which it shall be most convenient for us to consider shall be the following. 

\begin{thm} \label{BGMachine}
Let $G$ be a finite group; $S \subseteq G$ a symmetric generating set. Suppose:
\begin{itemize}
\item[(i)](Quasirandomness) There exists $\alpha > 0$ such that, 
for some finite groups $G_1 , \ldots , G_n$, $G=\prod_{i=1} ^n G_i$ and for every $i$, 
for any non-trivial representation $\rho : G_i \rightarrow GL_d (\mathbb{C})$, 
\begin{center}
$d \geq \lvert G_i \rvert^{\alpha}$. 
\end{center}
\item[(ii)](Product theorem) For all $\delta > 0$ there exists $\beta (\delta) > 0$ such that 
for any symmetric $A \subseteq G$ such that:
\begin{center}
$\lvert A \rvert < \lvert G \rvert^{1-\delta}$ and $\mu_A (gH) < [G:H]^{-\delta} \lvert G \rvert^{\beta}$
\end{center}
for any $g \in G$ and $H \lneq G$, then $\lvert A A A \rvert \gg \lvert A \rvert^{1+\beta}$. 

\item[(iii)](Escape from subgroups) There exists $\gamma > 0$ and $C_0 > 0$ such that 
for some $l \leq C_0 \log \lvert G \rvert$ and every $H \lneq G$,
\begin{center}
$\mu_S ^{(2l)} (H) \leq \lvert G : H \rvert^{-\gamma}$. 
\end{center}
\end{itemize}
Then there exists $\epsilon (\alpha,\beta(\cdot),\gamma,C_0,\lvert S \rvert) > 0$ 
such that $(G,S)$ is a two-sided $\epsilon$-expander. 
\end{thm}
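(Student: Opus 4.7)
\emph{Proof plan.} I would argue by contradiction: assume $(G,S)$ fails to be a two-sided $\epsilon$-expander, for $\epsilon > 0$ to be specified below, and derive conflicting lower and upper bounds on $\|\mu_S^{(l)}\|_2^2$ for a suitable $l \asymp \log|G|$. Writing $\mu = \mu_S$, recall that
\begin{equation*}
\|\mu^{(l)}\|_2^2 \;=\; \mu^{(2l)}(e) \;=\; \frac{1}{|G|} \tr(A_S^{2l}) \;=\; \frac{1}{|G|} \sum_{i=1}^{|G|} \lambda_i^{2l}.
\end{equation*}

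The lower bound comes from quasirandomness. If $\rho > 1 - \epsilon$, then $A_S$ has some eigenvalue $\lambda$ on $l_0^2(G)$ with $|\lambda| > 1 - \epsilon$. Its eigenspace is a $G$-subrepresentation of the regular representation, hence contains a full isotypic summand $\pi^{\oplus \dim\pi}$ with $\pi$ non-trivial. Writing $\pi = \boxtimes_{i=1}^n \pi_i$ with some $\pi_i$ non-trivial, hypothesis (i) gives $\dim\pi \geq |G_i|^\alpha$; in the applications of interest the factors $|G_i|$ will be comparable, so we obtain $\dim\pi \geq |G|^{\alpha'}$ for some $\alpha' > 0$. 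Choosing $l = C \log|G|$ then yields
\begin{equation*}
\|\mu^{(l)}\|_2^2 \;\geq\; \frac{1}{|G|}\bigl(1 + |G|^{\alpha'}(1-\epsilon)^{2l}\bigr) \;\geq\; |G|^{-1 + \alpha'/2}
\end{equation*}
provided $\epsilon$ is chosen small compared with $\alpha'/C$.

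The complementary upper bound $\|\mu^{(l)}\|_2^2 \leq |G|^{-1+\alpha'/4}$ is the substantive content, obtained by iterative $L^2$-flattening. From hypothesis (iii), standard Cauchy--Schwarz manipulations first upgrade escape from subgroups to escape from their cosets, at the cost of doubling the walk length. One then iteratively doubles the step count: at each stage, either $\|\mu^{(2l)}\|_2^2$ drops by a factor $|G|^{-c}$ below $\|\mu^{(l)}\|_2^2 / |G|$ (flattening succeeds), or the non-commutative Balog--Szemer\'{e}di--Gowers theorem of Tao produces an approximate subgroup $A \subseteq G$ on which $\mu^{(l)}$ concentrates and whose size is controlled by $\|\mu^{(l)}\|_2^{-2}$. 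Hypothesis (ii) then forces $A$ to be either almost all of $G$ or almost contained in a coset of a proper subgroup; both alternatives contradict the non-concentration inherited from (iii), so long as $\|\mu^{(l)}\|_2^2$ is still bounded away from $|G|^{-1}$. After $O(\log\log|G|)$ doublings the bound $|G|^{-1+\alpha'/4}$ is reached with $l \leq C \log|G|$, contradicting the earlier lower bound. The hardest part is calibrating all the exponents through this iteration so that the final $\epsilon$ depends only on $(\alpha, \beta(\cdot), \gamma, C_0, |S|)$, and propagating coset non-concentration carefully as the walk length grows.
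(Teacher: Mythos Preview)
The paper does not actually prove this theorem: immediately after the statement it writes ``A proof of this result is contained in Sections 3.1 and 5 of \cite{Varju}'' and refers the reader there and to Breuillard's survey. So there is no in-paper argument to compare against; your sketch is being held up against the Bourgain--Gamburd machine as developed in those references.

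At that level, your outline is the standard one and matches what the cited references do: the Sarnak--Xue lower bound on $\tr(A_S^{2l})$ from quasirandomness, combined with an $L^2$-flattening argument driven by Balog--Szemer\'{e}di--Gowers and the product theorem, with hypothesis~(iii) providing the initial non-concentration input. That is exactly the architecture the paper summarises in the paragraphs preceding the theorem.

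There is, however, one genuine soft spot. In deriving the lower bound you write that $\dim\pi \geq |G_i|^{\alpha}$ for some non-trivial factor $\pi_i$, and then say ``in the applications of interest the factors $|G_i|$ will be comparable, so we obtain $\dim\pi \geq |G|^{\alpha'}$.'' The theorem as stated places no such comparability assumption on the $|G_i|$, and in the intended applications (products of $SL_2(p^{n_j})$ with the $n_j$ distinct primes) the factors are emphatically \emph{not} of comparable size. This is precisely why the paper cites Section~5 of Varj\'{u} in addition to Section~3.1: the passage from quasirandomness of the individual factors to a useful spectral lower bound for the whole product requires a genuinely more delicate argument than the single-factor Sarnak--Xue trick, and accounts for much of the technical content of that section. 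Your sketch, as written, proves the theorem only when $n=1$ (or when the factors are of bounded ratio), and hand-waves exactly the step that the cited proof has to work for.
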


A proof of this result is contained in Sections 3.1 and 5 of \cite{Varju}. 
See also Section 3 of \cite{BreSurvey} for a self-contained and accessible account of a related 
(though non-equivalent) version of the Machine. 

\begin{rmrk} \label{weakernonconrmrk}
For $H \leq G$, and $\phi \in l^2 (G)$, define $\overline{\phi} \in l^2 (G/H)$ by:
\begin{center}
$\overline{\phi} (gH) = \sum_{h \in H} \phi (gh)$. 
\end{center}
Then since $S$ is symmetric, for $l \in \mathbb{N}$, 
\begin{center}
$\mu_S ^{(2l)} (H) = \lVert \overline{\mu_S ^{(l)}} \rVert_2 ^2$. 
\end{center}
Now define $\overline{A_S} : l^2 (G/H) \rightarrow l^2 (G/H)$ by: 
\begin{center}
$\overline{A_S} (F) (gH) = \frac{1}{\lvert S \rvert} \sum_{s \in S} F (sgH)$. 
\end{center}
Then $\overline{A_S}$ is a linear operator; it is a contraction 
(being the adjacency operator on the Schreier graph of $(G/H,S)$) 
and satisfies, for $\phi \in l^2 (G)$, 
\begin{center}
$\overline{A_S} (\overline{\phi}) = \overline{\mu_S \ast \phi}$. 
\end{center}
It follows that $\mu_S ^{(2l)} (H)$ is a decreasing function of $l$. 
Hypothesis (iii) of Theorem \ref{BGMachine} therefore follows from an apparently weaker variant, 
in which our $l \leq C_0 \log \lvert G \rvert$ is permitted to depend on the subgroup $H$. 
\end{rmrk}

\subsection{Further Questions and Structure of the Paper}

It is natural to ask whether the admissibility hypothesis in Theorems \ref{mainthm} and \ref{girthexpanders} may be weakened. However there are some significant obstacles to doing so. 
For instance it is clear that Theorem \ref{girthexpanders} does not remain true for arbitrary sequences $(n_i)_i$: 

\begin{ex} \label{girthnotconnected}
Let $n_i = 2^i$. Then we may identify $\mathbb{F}_{p^{n_i}}$ with a proper subfield of $\mathbb{F}_{p^{n_{i+1}}}$, 
and hence embed $SL_2 (p^{n_i}) \hookrightarrow SL_2 (p^{n_{i+1}})$. 
For $i$ even, let $S_{n_i}$ be a generating set for $SL_2 (p^{n_i})$ 
satisfying $\girth(SL_2 (p^{n_i}),S_{n_i}) \gg n_i$. 
For $i$ odd, let $S_{n_i} = S_{n_{i-1}}$, so that $\langle S_{n_i} \rangle \lneq SL_2 (p^{n_i})$. 
Then for every $i$, $\girth(SL_2 (p^{n_i}),S_{n_i}) \gg n_i$, 
but $\lbrace (SL_2 (p^{n_i}),S_{n_i}) \rbrace_{i \geq j}$ is not an expander family for any $j$. 
\end{ex}

So the presence of large subfield subgroups presents a genuine obstruction to expansion of subsets. 
It should be noted however that Example \ref{girthnotconnected} exhibits an obstruction to expansion 
which is \emph{qualitative}, rather than \emph{quantitative} in nature. 
That is to say, expansion in $(SL_2 (p^{n_i}),S_{n_i})$ 
fails simply by virtue of the fact that $\langle S_{n_i} \rangle \neq SL_2 (p^{n_i})$ for infinitely many $i$. 
This leads to the question of whether this is the \emph{only} obstruction to expansion in these groups. Specifically: 

\begin{qu} \label{girthgenexpex}
Let $S_n \subseteq SL_2 (p^n)$ with $\girth(SL_2 (p^n),S_n) \gg n$. Does there exist $\epsilon > 0$ 
such that $(\langle S_n \rangle,S_n)$ is an $\epsilon$-expander for all $n$ sufficiently large? 
\end{qu}

\begin{qu} \label{conggenexpex}
Let $S \subseteq SL_2 (\mathbb{F}_p [t])$ be a finite symmetric set generating a non-elementary subgroup. 
Let $(f_i)_i \subseteq \mathbb{F}_p [t]$ be a sequence of distinct irreducible polynomials. 
Does there exist $\epsilon > 0$ such that $(\langle \pi_{f_i} (S) \rangle,\pi_{f_i} (S))$ 
is an $\epsilon$-expander for all $i$ sufficiently large? 
\end{qu}

A second way in which Theorem \ref{mainthm} (ii) might be extended would be relax the assumption that 
no two irreducible factors of $f_i$ have the same degree. 
As a model case, let $f,g \in \mathbb{F}_p [t]$ be distinct irreducibles of degree $n$, 
and consider the group $SL_2 (\mathbb{F}_p [t]/(f \cdot g))$. 
By the Chinese Remainder Theorem, this may be identified with $SL_2 (p^n) \times SL_2 (p^n)$. 
A potential obstruction to expansion in this group comes from proper subdirect products of 
$SL_2 (p^n) \times SL_2 (p^n)$, which arise as the graphs of automorphisms of $SL_2 (p^n)$. 
It remains an open question how to demonstrate escape from such subgroups, 
as would be required for hypothesis (iii) of Theorem \ref{BGMachine}. 

The primality assumption in Theorem \ref{mainthm} (ii) comes from hypothesis (ii) of Theorem \ref{BGMachine}, 
which in our setting is satisfied by results of Varj\'{u} \cite{Varju}. 
The applicability of these results shall be discussed in more detail in Section \ref{ExpanderSection}.  
Roughly speaking though, for the product theorem to apply to reductions modulo polynomials with unboundedly many irreducible factors (so that the corresponding congruence quotients decompose as products with unboundedly many quasisimple factors), the subgroup structure of the quasisimple factors must be highly restricted. 
It seems plausible that a generalisation of Varj\'{u}'s product theorem which relaxes these restrictions may be discovered, and the primality assumption thereby removed. 

An expansion result for reductions modulo arbitrary square-free polynomials seems even further out of reach. 
For then the decompositions of the congruence quotients into products of quasisimple groups 
contain unboundedly many isomorphic factors, so Varj\'{u}'s product theorem fails even more dramatically. 
It may be that the fastest route to a result on expansion in this general setting is to tackle the question of concentration in approximate subgroups directly. 

Even an expansion result in the case of two irreducible factors of the same degree would have useful 
consequences for sieving in $SL_2 (\mathbb{F}_p [t])$. For in the presence of such a result 
(and the relevant strengthening of the product theorem indicated above) 
we could substitute the group sieve of Lubotzky-Meiri for Proposition \ref{bigsieve} in the proofs of 
Theorems \ref{squaresarerare} and \ref{redcharpolysarerare}, 
thereby improving the upper bounds in those two results from $e^{-C \sqrt{l/\log(l)}}$ to $e^{-C l}$. 

The paper is structured as follows: 
in Section \ref{ExpanderSection} we prove Theorems \ref{mainthm} and \ref{girthexpanders}. 
Specifically, Section \ref{freereduxsection} shall deal with hypotheses (i) and (ii) of Theorem \ref{BGMachine} 
and further reduce Theorem \ref{mainthm} to the case of non-abelian free subgroups. 
We then turn to hypothesis (iii) of Theorem \ref{BGMachine}. 
In Section \ref{IrrednonconSection} it is verified for Cayley graphs of $SL_2 (p^n)$ 
with large girth under the admissibility hypothesis. 
This yields Theorem \ref{mainthm} (i) and Theorem \ref{girthexpanders}. 
The generalisation of this argument required for Theorem \ref{mainthm} (ii) 
is explained in Section \ref{GennonconSection}. 

We discuss the applications to random walks in $SL_2 (\mathbb{F}_p [t])$ in Section \ref{sieve}. 
In Section \ref{TwoSievesSection} we explain in general terms how non-concentration results 
in infinite groups can be obtained using expansion results on finite quotients. 
Theorems \ref{sieveupstairs}, \ref{squaresarerare} and \ref{redcharpolysarerare} 
are proved in the subsequent three Sections. 

I would like to thank my supervisor, Marc Lackenby, 
for the abundance of support and sound advice he has given me in preparing this work, 
and his ongoing enthusiasm for my research in general. 
I am also grateful to Emmanuel Breuillard, Alireza Salehi-Golsefidy and Peter Varj\'{u} for enlightening conversations. 
Finally, I would like to thank EPSRC for providing financial support during the undertaking of this work. 

\section{Constructing the Expanders} \label{ExpanderSection}

As a notational convenience, for $n \in \mathbb{N}$ we set $Q_n = SL_2 (p^n)$. 

\subsection{Reduction to Escape for Free Generators} \label{freereduxsection}

In this section we reduce the proof of Theorem \ref{mainthm} to the following Proposition: 

\begin{propn} \label{nonconpropn}
Let $T \subseteq SL_2 (\mathbb{F}_p [t])$ be the symmetric closure of a finite subset, 
freely generating a non-abelian free subgroup. 
Suppose every entry of every element of $T$ has degree at most $\tilde{D}$. 
Then there exists $C,M,\gamma > 0$ (depending on $\tilde{D}$, $\lvert T \rvert$ and $p$) 
such that the following holds. 
Let $f \in \mathbb{F}_p[t]$ be an $M$-admissible square-free polynomial 
with no two irreducible factors having the same degree. 
Then for every $H \lneq G = SL_2 (\mathbb{F}_p [t]/(f))$, 
there exists $l \leq C \log \lvert G \rvert$ such that: 
\begin{center}
$\mu_{T} ^{(2l)} (H) \leq \lvert G : H \rvert^{-\gamma}$. 
\end{center}
\end{propn}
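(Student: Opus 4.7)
The plan is to deduce the multi-factor escape estimate from the single-factor case established in Section \ref{IrrednonconSection}, via a Chinese Remainder decomposition and a Goursat-style classification of proper subgroups of the resulting direct product. Writing $f = \prod_{j=1}^{r} f_j$ as a product of its distinct monic irreducible factors, of pairwise distinct $M$-admissible degrees $n_j$, the CRT gives $G \cong \prod_{j=1}^{r} Q_{n_j}$, with $\pi_f = (\pi_{f_1}, \ldots, \pi_{f_r})$; the push-forward of the random walk under each $\pi_{f_j}$ is a random walk on $Q_{n_j}$. Each $Q_{n_j}$ is quasisimple with centre $\{\pm I\}$, and since the $p^{n_j}$ are pairwise distinct the simple quotients $PSL_2(p^{n_j})$ are pairwise non-isomorphic (compare orders). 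An iterated application of Goursat's lemma then forces every subdirect subgroup of $G$ to equal $G$: the only possible non-trivial Goursat data between two quasisimple factors with non-isomorphic simple quotients is trivial. Consequently every proper $H \lneq G$ has $\pi_j(H) \lneq Q_{n_j}$ for at least one $j$, and more carefully one shows that, up to a bounded central twist, $H$ is controlled by its collection of projections $H_j := \pi_j(H)$, with $\prod_j \lvert Q_{n_j} : H_j \rvert$ comparable to $\lvert G : H \rvert$.

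With this structural picture I would apply the single-factor escape from Section \ref{IrrednonconSection} in each factor $j$ where $H_j$ is proper, obtaining $(\pi_{f_j})_{*} \mu_T^{(2l_j)}(H_j) \leq \lvert Q_{n_j} : H_j \rvert^{-\gamma'}$ for some $l_j \leq C' \log \lvert Q_{n_j} \rvert$, and then assemble the factorwise bounds into a single inequality $\mu_T^{(2l)}(H) \leq \lvert G : H \rvert^{-\gamma}$. Using Remark \ref{weakernonconrmrk} the parameter $l$ may depend on $H$, so one may take $l$ of the order of $\max_j l_j$, still satisfying the logarithmic constraint $l \leq C \log \lvert G \rvert$. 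The assembly itself proceeds by induction on the number of factors $j$ with $H_j \lneq Q_{n_j}$, repeatedly using the convolution identity $\mu_T^{(2l)}(H) = \lVert \overline{\mu_T^{(l)}} \rVert_{l^2(G/H)}^2$ to convert escape in one factor into $l^2$-decay on the Schreier graph $G/H$, and then layering on the escape in the next factor.

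The central obstacle is this last assembly step. The pushforward walks on distinct factors $Q_{n_j}$ are not probabilistically independent --- they arise from the \emph{same} walk on $SL_2(\mathbb{F}_p[t])$ --- so the single-factor bounds cannot simply be multiplied to recover the product index $\prod_j \lvert Q_{n_j} : H_j \rvert^{-\gamma'}$. What must make the multiplication work is the pairwise non-isomorphism of the simple quotients $PSL_2(p^{n_j})$: via the Goursat analysis it forbids any diagonal identification between distinct factors, and I expect this to translate into a spectral decoupling on the Schreier quotients $G/H$ that allows the single-factor estimates to be iterated while keeping $\gamma$ uniformly positive. This is precisely where the distinct-degree hypothesis on the irreducible factors of $f$ enters most essentially, and it is the reason a general square-free hypothesis (with repeated degrees) falls outside the scope of the present method, as flagged in the discussion after Theorem \ref{mainthm}.
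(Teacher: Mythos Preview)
Your setup via the Chinese Remainder Theorem and Goursat's lemma is correct and matches the paper, including the key consequence (Corollary~\ref{subgrpsnotontofactors}) that at least one projection $\pi_{p_j}(H)$ is proper. However, the heart of your proposal---obtaining a separate escape bound in each factor $Q_{n_j}$ and then ``assembling'' these into a bound involving $\lvert G:H\rvert$---is where the gap lies. You yourself flag that the factor walks are not independent and that you only \emph{expect} a ``spectral decoupling'' to make the multiplication work; this expectation is not substantiated, and in fact no such decoupling argument is carried out (or needed) in the paper. Relatedly, the structural claim that $\prod_j \lvert Q_{n_j}:H_j\rvert$ is \emph{comparable} to $\lvert G:H\rvert$ is not established and is not used in the paper's proof; only the one-sided inequality $\lvert G:H\rvert \leq \lvert \pi_{F_2 F_3}(G)\rvert$ (coming from surjectivity of $H$ onto the $F_1$-factors) is required.

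The paper's route avoids any factor-by-factor combination altogether. After the Goursat step, it partitions the irreducible factors $p_j$ according to whether $\pi_{p_j}(H)$ is (i) all of $Q_{n_j}$, (ii) metabelian, or (iii) proper but not metabelian (hence, by admissibility and Proposition~\ref{subgroupclassification}, of size at most $\lvert Q_{n_j}\rvert^{1/C_2}$). Writing $F_1,F_2,F_3$ for the corresponding products of the $p_j$, one observes that $\pi_{F_2}(H)$ is metabelian (a product of metabelian groups is metabelian) and that $\lvert\pi_{F_3}(H)\rvert \leq \lvert\pi_{F_3}(G)\rvert^{1/C_2}$. The point is that Lemma~\ref{noncontypes} is stated for an \emph{arbitrary} finite group with a girth bound, and Lemma~\ref{girthlem} supplies that girth bound for reduction modulo \emph{any} polynomial, in particular modulo $F_2$ or $F_3$. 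One then takes whichever of $F_2,F_3$ has larger degree, projects to that single sub-product, and applies Lemma~\ref{noncontypes} \emph{once}. This gives $\mu_T^{(2l)}(H) \leq \lvert \pi_{F_2 F_3}(G)\rvert^{-\gamma/2} \leq \lvert G:H\rvert^{-\gamma/2}$ directly, with $l$ bounded by a multiple of $\log\lvert\pi_{F_2 F_3}(G)\rvert \leq \log\lvert G\rvert$. No induction on the number of factors and no combination of single-factor estimates is required.
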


The reduction shall be via Theorem \ref{BGMachine}. 
We reference known results which cover hypotheses (i) and (ii) of Theorem \ref{BGMachine}. 
We then use the general results about expanders from Section \ref{expanderssubsection} 
to reduce the question of expansion for arbitrary sets $S$ as in the Statement of Theorem \ref{mainthm} 
to expansion for finite sets $T \subseteq \langle S \rangle$ freely generating $\langle T \rangle$. 
This shall be via a Tits alternative. 

The quasirandomness condition in our setting is classical (see for instance \cite{LandSeitz}):

\begin{thm} \label{quasirandomness}
There is an absolute constant $C>0$ such that every non-trivial complex representation of $Q_n$ 
has dimension at least $C p^n$. 
\end{thm}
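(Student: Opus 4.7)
The plan is to invoke the classical complex character table of $SL_2(q)$, $q = p^n$, as computed by Frobenius, Schur and Jordan. For $q$ odd this table shows that the degrees of the irreducible complex characters of $SL_2(q)$ lie in the set $\{1, (q-1)/2, (q+1)/2, q-1, q, q+1\}$. Since $SL_2(q)$ is perfect for $q \geq 5$, the only one-dimensional irreducible is the trivial character; hence any non-trivial irreducible complex representation has dimension at least $(q-1)/2$. A general non-trivial representation $\rho : Q_n \to GL_d(\mathbb{C})$ decomposes as a direct sum of irreducibles by Maschke, at least one of which must then be non-trivial, so $d \geq (q-1)/2 \geq q/3$, giving the theorem with $C = 1/3$ for all $q \geq 5$.

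The bound $d \geq q/3$ needs to be adjusted only for the single small case $q = 3$ (recall $p \geq 3$), where $SL_2(3)$ does admit non-trivial one-dimensional characters, namely those factoring through its abelianisation $\mathbb{Z}/3\mathbb{Z}$. This finite exception is absorbed into the absolute constant $C$ by taking $C$ strictly less than $1/3$.

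A more structural alternative, which avoids the explicit character computation, is to note that the only proper normal subgroups of $SL_2(q)$ for $q \geq 4$ are $\{1\}$ and the centre $\{\pm I\}$, by simplicity of $PSL_2(q)$. Every non-trivial irreducible representation therefore either is faithful on $SL_2(q)$ or descends to a faithful representation of $PSL_2(q)$. In the latter case one invokes directly the Landazuri--Seitz bound \cite{LandSeitz} on the minimal dimension of a non-trivial projective representation of $PSL_2(q)$, which is $(q-1)/2$ for $q$ odd. In the former, Clifford theory applied to the restriction to the centre reduces to bounding the dimension of an irreducible on which $-I$ acts as $-\mathrm{Id}$, and the same bound $(q-1)/2$ holds via the Weil representation. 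There is no genuine obstacle here: the statement is essentially a textbook fact from the representation theory of $SL_2$ over finite fields, and its role in the sequel is merely to supply hypothesis (i) of Theorem \ref{BGMachine}.
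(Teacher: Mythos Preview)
Your proposal is correct. The paper does not give a proof of this theorem at all: it simply states that the result is classical and cites Landazuri--Seitz \cite{LandSeitz}. Your argument via the explicit character table of $SL_2(q)$ is a valid and self-contained justification, and your alternative route through the Landazuri--Seitz bound is precisely the reference the paper invokes. So there is no divergence in approach to speak of; you have merely supplied the details that the paper omits by citation.
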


Let $f$ be as in Proposition \ref{nonconpropn} 
and let $p_1 , \ldots , p_N$ be the irreducible factors of $f$, 
of degrees $n_1 , \ldots , n_N$ respectively. 
It follows from the Chinese Remainder Theorem that:
\\
\begin{lem} \label{CRT}
The natural map:
\begin{center}
$(\prod_{j=1} ^N \pi_{p_j}) : SL_2 (\mathbb{F}_p [t] / (f)) \rightarrow \prod_{j=1} ^N Q_{n_j}$ 
\end{center}
is an isomorphism. 
\end{lem}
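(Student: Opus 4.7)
The plan is to derive the isomorphism in two stages: first at the level of coefficient rings via the classical Chinese Remainder Theorem, then pass to $SL_2$ by functoriality.

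First I would observe that since $f = p_1 \cdots p_N$ with the $p_j$ pairwise coprime (they are distinct irreducibles and $f$ is square-free), the ideals $(p_j) \subseteq \mathbb{F}_p[t]$ are pairwise comaximal. The Chinese Remainder Theorem for the commutative ring $\mathbb{F}_p[t]$ therefore yields a ring isomorphism
\begin{center}
$\mathbb{F}_p[t]/(f) \;\xrightarrow{\sim}\; \prod_{j=1}^N \mathbb{F}_p[t]/(p_j)$
\end{center}
whose $j$th coordinate is the natural reduction map. Since $p_j$ is irreducible of degree $n_j$, the quotient $\mathbb{F}_p[t]/(p_j)$ is a field of order $p^{n_j}$, i.e.\ $\mathbb{F}_{p^{n_j}}$.

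Next I would apply the functor $SL_2(\cdot)$. For any pair of commutative rings $R,S$, the identification $M_2(R \times S) = M_2(R) \times M_2(S)$ respects matrix multiplication, and the determinant of a pair is the pair of determinants; hence $SL_2(R \times S) = SL_2(R) \times SL_2(S)$ as groups, with the isomorphism given componentwise. Iterating gives $SL_2\bigl(\prod_j \mathbb{F}_{p^{n_j}}\bigr) = \prod_j SL_2(\mathbb{F}_{p^{n_j}}) = \prod_j Q_{n_j}$. Composing with the $SL_2$-image of the coefficient isomorphism above, and noting that this composition sends a matrix to the tuple of its reductions modulo the $p_j$, one recovers precisely the map $\prod_j \pi_{p_j}$ in the statement. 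This is the claimed isomorphism.

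There is no substantive obstacle here: the lemma is essentially bookkeeping, combining CRT for $\mathbb{F}_p[t]$ with the fact that $SL_2$, being defined by polynomial equations, commutes with finite products of commutative rings. The only point worth flagging is that the square-freeness of $f$ is exactly what guarantees pairwise comaximality of the $(p_j)$, and hence the applicability of CRT; without it one would only get a surjection onto the product.
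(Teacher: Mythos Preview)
Your argument is correct and matches the paper's approach exactly: the paper simply states that the lemma ``follows from the Chinese Remainder Theorem'' without further detail, and your proposal supplies precisely the standard unpacking of that remark (CRT on the coefficient ring, then functoriality of $SL_2$ with respect to finite products).
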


We turn next to the product theorem. In the setting of Theorem \ref{mainthm} (i), this is due to Dinai \cite{Dinai}. 
For Theorem \ref{mainthm} (ii) we use Proposition 14 of \cite{Varju}, which we quote in full: 

\begin{thm} \label{uniformgrowth}
For all $\delta > 0$, $L \in \mathbb{N}$ and $\beta : \mathbb{R}^+ \rightarrow \mathbb{R}^+$ 
there exists $\beta' _{L,\beta} (\delta) > 0$ such that the following holds. 
Let $G$ be a finite group, $G_1 , \ldots , G_N$ be finite groups such that $G \cong G_1 \times \cdots \times G_N$. Suppose: 
\begin{itemize}
\item[(i)] For any finite group $F$, $\lvert \lbrace i \in \lbrace 1 , \ldots , N \rbrace : G_i \cong F \rbrace \rvert \leq L$. 
\item[(ii)] For $1 \leq i \leq N$, $G_i$ is quasisimple and $\lvert Z(G_i) \rvert \leq L$. 
\item[(iii)] For $1 \leq i \leq N$, any non-trivial complex representation of $G_i$ has dimension at least $\lvert G_i \rvert^{\frac{1}{L}}$. 
\item[(iv)] For $1 \leq i \leq N$ and for some $m < L$, there are classes $\mathcal{H}_0 , \mathcal{H}_1 , \ldots , \mathcal{H}_m$ of subgroups of $G_i$ satisfying: 
\begin{itemize}
\item[(a)] $\mathcal{H}_0 = \lbrace Z (G_i) \rbrace$. 
\item[(b)] Each $\mathcal{H}_j$ is closed under conjugation in $G_i$. 
\item[(c)] For each $H < G_i$ there is $1 \leq j \leq m$ and $H^{\sharp} \in \mathcal{H}_j$ such that 
$\lvert H:H \cap H^{\sharp} \rvert \leq L$. 
\item[(d)] For $1 \leq j \leq m$ and for each $H_1 , H_2 \in \mathcal{H}_j$ with $H_1 \neq H_2$, there exists $j' < j$ and $H^{\sharp} \in \mathcal{H}_{j'}$ such that $\lvert H_1 \cap H_2 : H_1 \cap H_2 \cap H^{\sharp} \rvert \leq L$.
\end{itemize}
\end{itemize}
If $G_1 , \ldots , G_N$ satisfy hypothesis (ii) of Theorem \ref{BGMachine} with respect to the function $\beta$, 
then $G$ satisfies hypothesis (ii) of Theorem \ref{BGMachine} with respect to the function $\beta' _{L,\beta}$. 
\end{thm}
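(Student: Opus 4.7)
The plan is to prove the result by induction on the number $N$ of direct factors, using the hierarchical subgroup structure furnished by hypothesis (iv) to control how proper subgroups of $G$ project onto the factors. The base case $N=1$ is immediate: it is precisely the assumption that each $G_i$ individually satisfies hypothesis (ii) of Theorem \ref{BGMachine} with respect to the function $\beta$, so we may take $\beta'_{L,\beta} = \beta$.

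For the inductive step, suppose $A \subseteq G = G_1 \times \cdots \times G_N$ is symmetric and satisfies the non-concentration hypothesis with parameter $\delta$, and assume for contradiction that $|AAA| < |A|^{1+\beta'}$ for the target $\beta' = \beta'_{L,\beta}(\delta)$. The non-commutative Balog-Szemer\'edi-Gowers theorem yields an approximate subgroup $B$ such that a positive proportion of $A$ is covered by a bounded number of translates of $B$, and it suffices to derive a contradiction for $B$. I would then split into cases based on the projection $B_N = \pi_N(B) \subseteq G_N$. In the non-degenerate case, $B_N$ itself satisfies the non-concentration hypothesis in $G_N$ up to a controlled loss in $\delta$, so by the per-factor product theorem $|B_N B_N B_N| \gg |B_N|^{1+\beta(\delta')}$; combined with a Ruzsa-type product inequality, control of the typical $G_N$-fibre of $B$, and the inductive hypothesis applied to $G_1 \times \cdots \times G_{N-1}$, this forces growth of $B$ in $G$, a contradiction.

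In the degenerate case, $B_N$ concentrates heavily on a proper subgroup $H_N < G_N$. Hypothesis (iv)(c) places a conjugate of $H_N$ within bounded index of some $H^\sharp \in \mathcal{H}_{j_N}$, whereupon an averaging argument shows that a sizeable portion of $B$ lies within a coset of the subgroup $G_1 \times \cdots \times G_{N-1} \times H^\sharp$. Non-concentration of the original $A$ on this proper subgroup of $G$ then transfers to non-concentration of the relevant portion of $B$ inside the smaller product, allowing the induction to bite. Hypothesis (iv)(d), asserting that distinct members of $\mathcal{H}_{j_N}$ intersect essentially inside some lower class $\mathcal{H}_{j'}$, is what ensures that when this inner reduction is iterated within $G_N$ the filtration level strictly decreases, so that the procedure terminates in a bounded number of steps.

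The principal obstacle is quantitative: each reduction step degrades the non-concentration exponent by a loss depending on $L$ and on the accuracy with which one approximates $H_N$ by $H^\sharp$, so one must begin with $\delta$ sufficiently large (as a function of the total complexity $N$, $m$ and $L$) for the iterated induction to close, and the final function $\beta'_{L,\beta}$ emerges as a deeply iterated composition of $\beta$ with itself. A secondary subtlety is handling subgroups of $G$ which project surjectively onto many factors at once, in particular ``diagonal'' subgroups arising from graphs of partial isomorphisms between the $G_i$; these are controlled by hypothesis (i), which limits the multiplicity with which any isomorphism type appears, while hypothesis (iii) supplies the quasirandomness needed to propagate non-concentration estimates from individual fibres to the whole product.
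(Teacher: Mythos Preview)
The paper does not actually prove this theorem: it is quoted verbatim as Proposition~14 of Varj\'u \cite{Varju}, with no argument supplied beyond the citation. So there is no ``paper's own proof'' to compare against, and any assessment of your sketch has to be against the statement itself (and, implicitly, against what Varj\'u does).

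That said, your outline has a real structural problem. You propose induction on $N$, with each step degrading the non-concentration exponent, and you explicitly note that ``one must begin with $\delta$ sufficiently large (as a function of the total complexity $N$, $m$ and $L$)''. But the conclusion to be proved is that $\beta'_{L,\beta}(\delta)$ depends only on $L$, $\beta$ and $\delta$, \emph{not} on $N$; indeed the whole point of the result (and the reason the paper invokes it) is that it applies to products with arbitrarily many factors, as arise from square-free moduli with unboundedly many irreducible divisors. A naive induction on $N$ that loses a fixed fraction of the exponent at each step will give a $\beta'$ tending to $0$ as $N \to \infty$, which is useless. Varj\'u's argument is organised precisely so as to avoid this: the ``inner'' iteration takes place within the bounded-depth hierarchy $\mathcal{H}_0, \ldots, \mathcal{H}_m$ (with $m < L$), not along the $N$ factors, and the passage between factors is handled in a way that does not accumulate losses in $N$. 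Your sketch gestures at the role of (iv)(d) in bounding the inner iteration, but does not explain how the outer dependence on $N$ is eliminated; absent that, the argument does not close.
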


We check that this result applies to $G = SL_2 (\mathbb{F}_p [t] / (f_i))$, for $f_i$ as in Theorem \ref{mainthm} (ii). The decomposition as a product is given by Lemma \ref{CRT}. (i) follows from the assumption that no two irreducible factors of $f_i$ have the same degree. (ii) is well-known for $G_i = Q_{n_i}$. 
(iii) is Theorem \ref{quasirandomness}. For (iv), we recall the classification of subgroups of $Q_n$ (see for instance \cite{OHKing}). 

\begin{propn} \label{subgroupclassification}
For $\mathbb{F}_q$ the finite field of order $q$ and characteristic $p \geq 3$, 
any proper subgroup $H$ of $SL_2 (\mathbb{F}_q)$ satisfies one of the following: 
\begin{center}
\begin{itemize}
\item[(i)] $H$ fixes a point in the projective line $\mathbb{F}_{q^2} \mathbb{P}^1$ over the quadratic extension $\mathbb{F}_{q^2}$ of $\mathbb{F}_q$. In particular $H$ is metabelian. \\
\item[(ii)] $H \leq S_5$. \\
\item[(iii)] $H$ is conjugate in $SL_2 (\mathbb{F}_q)$ to a subgroup of 
$SL_2 (\mathbb{E})$ for some proper subfield $\mathbb{E}$ of $\mathbb{F}_q$.
\end{itemize}
\end{center}
\end{propn}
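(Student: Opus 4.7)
The plan is to invoke the classical theorem of Dickson classifying the subgroups of $PSL_2(\mathbb{F}_q)$, and to lift the result to $SL_2(\mathbb{F}_q)$. The two steps are therefore: first, classify the image $\bar H$ of $H$ under the central quotient $SL_2(\mathbb{F}_q) \twoheadrightarrow PSL_2(\mathbb{F}_q)$; second, translate each resulting case back to $SL_2(\mathbb{F}_q)$ and verify it lies in one of (i)--(iii).

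Concretely, after reducing modulo the centre $\{\pm I\}$, Dickson's theorem tells us that $\bar H$ is contained in one of: (a) a Borel subgroup, i.e.\ the stabilizer of a point of $\mathbb{F}_q \mathbb{P}^1$; (b) the normalizer of a maximal torus, i.e.\ the stabilizer of an unordered pair of points in $\mathbb{F}_q \mathbb{P}^1$ (split case) or in $\mathbb{F}_{q^2} \mathbb{P}^1 \setminus \mathbb{F}_q \mathbb{P}^1$ (non-split case); (c) one of the exceptional primitive subgroups $A_4$, $S_4$, $A_5$; or (d) a subfield subgroup $PSL_2(\mathbb{F}_{q'})$ or $PGL_2(\mathbb{F}_{q'})$ for a proper subfield $\mathbb{F}_{q'}$ of $\mathbb{F}_q$. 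The Borel case gives (i) directly. For a torus $T$, its two fixed points on $\mathbb{F}_{q^2} \mathbb{P}^1$ are individually stabilised by each element of $T$, so subgroups of $T$ (and, via their metabelian structure, of its normalizer) fall into (i). The exceptional groups $A_4$, $S_4$, $A_5$ all embed into $S_5$, giving (ii). Finally, subfield images, once lifted through the centre, are conjugate to $SL_2(\mathbb{E})$ inside $SL_2(\mathbb{F}_q)$ for some proper subfield $\mathbb{E}$, giving (iii).

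The main technical obstacle is the book-keeping when lifting from $PSL_2$ back to $SL_2$: normalizers of tori do not literally fix a point of $\mathbb{F}_{q^2}\mathbb{P}^1$ yet are metabelian, and preimages of the exceptional subgroups in $SL_2$ may be binary polyhedral groups rather than $A_4$, $S_4$, $A_5$ themselves, so one must verify that these preimages still embed into $S_5$ or else are absorbed by one of the other cases. The most efficient route is therefore to appeal directly to the formulation of \cite{OHKing}, which is stated for $SL_2(\mathbb{F}_q)$ and handles these subtleties uniformly, rather than re-running the case analysis by hand.
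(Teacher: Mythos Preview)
The paper does not prove this proposition at all: it is simply quoted as a classical fact with a reference to \cite{OHKing}. Your proposal ultimately lands in the same place, invoking \cite{OHKing} directly, so it is consistent with the paper's treatment; the additional sketch via Dickson's theorem and the honest flagging of the lifting subtleties (torus normalizers not literally fixing a point of $\mathbb{F}_{q^2}\mathbb{P}^1$, and the binary polyhedral preimages of $A_4,S_4,A_5$) is extra context the paper does not supply.
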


Define $\mathcal{H}_1$ to be the set of stabilisers in $Q_n$ of pairs of distinct points in $\mathbb{F}_{p^{2n}} \mathbb{P}^1$, and $\mathcal{H}_2$ to be the set of stabilisers in $Q_n$ of points in $\mathbb{F}_{p^{2n}} \mathbb{P}^1$. We check that the conditions of Theorem \ref{uniformgrowth} (iv) are satisfied by $\mathcal{H}_0 = \lbrace Z (Q_n) \rbrace,\mathcal{H}_1,\mathcal{H}_2$, in the case for which $n$ is prime. (a), (b) are obvious, and (c) is immediate from Proposition \ref{subgroupclassification}, since by primality of n, the only proper subfield subgroups of $Q_n$ are the conjugates of $Q_1$, which are of bounded size. (d) is a consequence of the following elementary fact from linear algebra: 

\begin{lem}
Suppose $g \in Q_n$ has at least three distinct fixed points in $\mathbb{F}_{q^2} \mathbb{P}^1$. Then $g \in Z(Q_n)$. 
\end{lem}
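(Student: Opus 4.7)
The plan is to translate the statement into linear algebra over $\mathbb{F}_{q^2}$ and reduce it to the elementary fact that a $2 \times 2$ matrix with three independent eigenlines is a scalar.

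First I would note that the action of $Q_n = SL_2(p^n) \leq SL_2(\mathbb{F}_{q^2})$ on $\mathbb{F}_{q^2} \mathbb{P}^1$ by Möbius transformations is exactly the action on the set of $1$-dimensional $\mathbb{F}_{q^2}$-subspaces of $\mathbb{F}_{q^2}^2$ induced by the linear action of $g$ on $\mathbb{F}_{q^2}^2$. Consequently, a fixed point of $g$ on $\mathbb{F}_{q^2} \mathbb{P}^1$ is precisely an eigenline of $g$ over $\mathbb{F}_{q^2}$.

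Suppose now that $L_1, L_2, L_3$ are three distinct eigenlines of $g$, with corresponding eigenvalues $\lambda_1, \lambda_2, \lambda_3 \in \mathbb{F}_{q^2}$. Since $L_1 \neq L_2$ and these are $1$-dimensional subspaces of a $2$-dimensional space, choosing nonzero $v_i \in L_i$ gives $v_1, v_2$ a basis of $\mathbb{F}_{q^2}^2$. Writing $v_3 = a v_1 + b v_2$ with $a, b \in \mathbb{F}_{q^2}$, the condition $L_3 \notin \{L_1, L_2\}$ forces $a, b \neq 0$. Applying $g$ and comparing with $\lambda_3 v_3$ yields $\lambda_1 = \lambda_2 = \lambda_3$, so $g$ acts as a single scalar $\lambda$ on the basis $\{v_1, v_2\}$ and hence $g = \lambda I$.

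Finally, $g \in Q_n \subseteq SL_2(\mathbb{F}_{q^2})$ imposes $\det(g) = \lambda^2 = 1$, so $\lambda = \pm 1$ and $g = \pm I \in Z(Q_n)$ (recall $p \geq 3$). There is no real obstacle here; the only point requiring a little care is the passage from fixed points on $\mathbb{F}_{q^2} \mathbb{P}^1$ to $\mathbb{F}_{q^2}$-eigenlines, which is immediate once the action is unpacked.
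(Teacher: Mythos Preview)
Your argument is correct and is exactly the elementary linear-algebra computation the paper has in mind: the paper does not actually supply a proof of this lemma, but simply labels it an ``elementary fact from linear algebra'' and moves on. Your unpacking of fixed points on $\mathbb{F}_{q^2}\mathbb{P}^1$ as eigenlines, followed by the observation that three distinct eigenlines in a $2$-dimensional space force the matrix to be scalar (hence $\pm I$ by the determinant condition), is precisely the intended justification.
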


Now let $S$ be as in the statement of Theorem \ref{mainthm}. 
We produce a pair of words in $S$ freely generating a non-abelian free subgroup. 
In the classical Tits alternative, the lengths of our free generators as words in $S$, 
and hence the degrees of their entries, depend on $S$ and not just on $D$. 
However, we can obtain a bound depending only on $D$ by utilising the following result of Breuillard: 

\begin{thm}[Uniform Tits Alternative \cite{UnifTits}] \label{uniformtits}
 For every $d \geq 2$, there exists $N(d) > 0$ such that, 
 for any field $K$, and $S \subseteq SL_d (K)$ finite symmetric, 
 either $\langle S \rangle$ is virtually soluble or the ball $B_S (N(d))$ of radius $N(d)$ 
 in the word metric contains two elements which freely generate a non-abelian free subgroup of $SL_d (K)$. 
\end{thm}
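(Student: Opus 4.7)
The plan is to follow the classical Tits alternative --- produce free generators via a ping-pong argument on projective space --- while adding uniform quantitative control on every step using height theory.

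First I would reduce from arbitrary finite symmetric $S$ to pairs of elements. By a Jordan--Schur-type theorem, every finitely generated subgroup of $SL_d(K)$ has a normal virtually soluble subgroup of index at most some $J(d)$. Consequently, if $\langle S \rangle$ is not virtually soluble, then some pair $g, h \in B_S(k_1(d))$ already generates a non-virtually-soluble subgroup: the pairs with virtually soluble Zariski closure are cut out by Zariski conditions of complexity bounded in $d$, so the hypothesis on $\langle S \rangle$ forces such a pair to appear among bounded words. This reduces matters to producing free generators of bounded word length inside $\langle g, h \rangle$.

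For the 2-generator problem, the classical approach embeds $K$ into a local field in which some power of $g$ acts \emph{proximally} on $\mathbb{P}^{d-1}$ --- unique attracting fixed point with strong contraction elsewhere --- and then plays ping-pong with $g^N$ and a conjugate $(h g h^{-1})^N$ for large $N$. The obstruction to uniformity is that $N$ depends on the proximality of $g$, namely the ratio of its two largest eigenvalues after the embedding, and \emph{a priori} this ratio is not bounded below in terms of $d$ alone. The essential input that restores uniformity is a height gap theorem: if $g$ belongs to a non-virtually-soluble subgroup of $GL_d(\overline{\mathbb{Q}})$, then the normalized (Weil-type) height satisfies $\hat{h}(g) \geq c(d) > 0$. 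Via the product formula over all places of the number field generated by the entries of $g$, this gap converts into a uniform lower bound on the proximality ratio of some power $g^k$ with $k$ bounded in $d$, and ping-pong then closes in a uniformly bounded number of steps $N_2(d)$. Taking $N(d) = k_1(d) + O(N_2(d))$ completes the argument.

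The hard part will be the height gap theorem itself, whose proof demands substantial Diophantine input --- morally, a quantitative Lehmer-type estimate adapted to representations of non-virtually-soluble linear groups. Granting it, uniformity in $K$ comes for free because heights are intrinsic rather than tied to any fixed embedding, while uniformity in $|S|$ is handled by the reduction to two generators. A non-standard alternative --- assume a sequence of counterexamples $(K_n, S_n)$, pass to an ultraproduct, and contradict the classical Tits alternative in the limit --- still runs through the same height-gap phenomenon to rule out a virtually soluble limit along a non-virtually-soluble sequence, and therefore does not bypass the main difficulty.
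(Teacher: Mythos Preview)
The paper does not prove this theorem; it is quoted from Breuillard's paper \cite{UnifTits} and invoked as a black box in the proof of Theorem~\ref{mainthm}. There is therefore no proof in the paper to compare your proposal against.

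That said, your outline is broadly faithful to Breuillard's actual argument: the height gap theorem is the decisive input, and uniformly controlled ping-pong is the endgame. One step in your reduction is incorrect as written, however. The assertion that ``every finitely generated subgroup of $SL_d(K)$ has a normal virtually soluble subgroup of index at most some $J(d)$'' is false --- a non-abelian free subgroup of $SL_d(K)$ has no nontrivial soluble normal subgroup at all, so the index is infinite. Jordan's theorem is a statement about \emph{finite} linear groups and does not transfer in this form. Breuillard's reduction to two generators instead goes through the normalized height: one shows that $\hat{h}(S)$ is already achieved, up to constants depending only on $d$, by some pair of elements in a ball $B_S(k_1(d))$, and that non-virtual-solubility is likewise witnessed by words of bounded length (an effective escape-from-subvarieties argument for the Zariski closure). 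With that correction your sketch matches the published strategy, and your remark that the ultraproduct route does not circumvent the height gap is accurate.
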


\begin{proof}[Proof of Theorem \ref{mainthm}]
Let $N=N(2)$ be as in Theorem \ref{uniformtits} and let \linebreak $x , y \in B_S (N)$ freely generate a non-abelian free group. 
Every entry of every member of $T = \lbrace x^{\pm 1} , y^{\pm 1} \rbrace$ 
is expressible as a sum of monomials of degree at most $N$ in the entries of the elements of $S$, 
hence has degree at most $DN$. We now apply Theorem \ref{BGMachine} to $(SL_2 (\mathbb{F}_p[t]/(f_i))),\pi_{f_i}(T))$. 
\\ \\
We verify the conditions of Theorem \ref{BGMachine}. 
Hypothesis (i) is immediate from Theorem \ref{quasirandomness} and Lemma \ref{CRT}. 
Hypothesis (ii) follows from \cite{Dinai} and Theorem \ref{uniformgrowth}. 
Hypothesis (iii) follows from Proposition \ref{nonconpropn}, applied with $f=f_i$ and $\tilde{D}=DN$, and Remark \ref{weakernonconrmrk}. 
\\ \\
We conclude that $(SL_2 (\mathbb{F}_p[t]/(f_i)),\pi_{f_i}(T))_i$ is an expander family. 
By Lemma \ref{biggersubgrp}, $(SL_2 (\mathbb{F}_p[t]/(f_i)),\pi_{f_i}(B_S (N)))_i$ is an expander family. 
The required result follows from Lemma \ref{indepgen}, since $\langle S \rangle = \langle B_S (N) \rangle$. 
\end{proof}

\begin{rmrk}
The constants $M$ and $i_0$ in the statement of Theorem \ref{mainthm} could in principle be computed, 
by keeping track of the bounds arising in the proof of Proposition \ref{nonconpropn} below. 
They shall involve both the constant $N$ from the statement of the Uniform Tits Alternative 
and the known spectral radius $\sqrt{8/9}$ 
for the simple random walk on $\lbrace x^{\pm 1} ,y^{\pm 1} \rbrace$ in $F(x,y)$. 
Moreover, the proof of the Uniform Tits Alternative is effective, 
so $N$ could in principle be computed (though to our knowledge this has not been done). 
Such a computation would yield an explicit description of the degrees of reductions which would give rise to families of expanders, in terms only of the degrees of the entries of elements of $S$, $\lvert S \rvert$ and $p$. 
\end{rmrk}

\subsection{Escape from Subgroups: The Irreducible Case} \label{IrrednonconSection}

In this section we warm up to the proof of Proposition \ref{nonconpropn} 
by examining the case for which the polynomials $f_i$ are irreducible, 
so that $SL_2 (\mathbb{F}_p [t]/(f_i)) = Q_{\deg(f_i)}$. 
The proof of this case shall contain all the key ideas of the general case 
(to be discussed in the following section) but is technically simpler. 
Indeed, more generally we shall prove: 

\begin{propn} \label{nonconsubgrps1}
For any $C_1 > 0$ and any $k \in \mathbb{N}$ with $k \geq 2$, 
there exists $C_2,C_3, \gamma>0$ 
(depending on $C_1 , p , \lvert S \rvert$) such that, 
if $n$ is a $C_2$-admissible positive integer, 
$S_n \subseteq Q_n$ is symmetric with $\lvert S_n \rvert = 2k$, 
and  \linebreak$\girth(Q_n , S_n) \geq C_1 n$, 
then for $n$ sufficiently large and for all $H_n \lneq Q_n$, there exists $l \leq C_3 \log \lvert Q_n \rvert$ such that: 
\begin{center}
$\mu_{S_n} ^{(2l)} (H_n) \leq \lvert Q_n \rvert^{-\gamma}$. 
\end{center}
\end{propn}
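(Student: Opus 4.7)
The plan is to verify escape from each class of proper subgroups of $Q_n$ arising in Proposition \ref{subgroupclassification}. Identify $S_n = \pi(T)$, where $T = \lbrace x_1 ^{\pm 1} , \ldots , x_k ^{\pm 1} \rbrace$ is a free basis of $F_k$ and $\pi : F_k \twoheadrightarrow \langle S_n \rangle$ is the induced surjection. The girth hypothesis forces $\pi$ to be injective on reduced words of length less than $C_1 n$, so Kesten's bound $\mu_T ^{(2l)} (1_{F_k}) \leq \rho_k ^{2l}$ (with $\rho_k = \sqrt{2k-1}/k$) pushes forward, by symmetry of $\mu_{S_n}$, to
\begin{center}
$\mu_{S_n} ^{(2l)} (h) \leq \rho_k ^{2l}$ for every $h \in Q_n$, whenever $2l \leq C_1 n$,
\end{center}
and so $\mu_{S_n} ^{(2l)} (H_n) \leq \lvert H_n \rvert \rho_k ^{2l}$ in that range.

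For $H_n \leq S_5$ this trivial bound with $l = \lfloor C_1 n / 2 \rfloor$ already delivers the desired estimate, for any $\gamma < C_1 \lvert \log \rho_k \rvert / (3 \log p)$ and $n$ large. For $H_n$ contained in a subfield subgroup $SL_2 (\mathbb{E})$, the $C_2$-admissibility of $n$ forces $[\mathbb{F}_{p^n} : \mathbb{E}] \geq C_2$, whence $\lvert H_n \rvert \leq p^{3n/C_2}$; choosing $C_2$ large (in terms of $C_1 , p , k$) again lets the trivial bound suffice.

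The crucial case is the metabelian one. Here $\lvert H_n \rvert$ can be as large as $\sim \lvert Q_n \rvert^{2/3}$ (Borel subgroups) or $\sim \lvert Q_n \rvert^{1/3}$ (non-split torus normalisers), which defeats the trivial bound when $C_1 < 2 \log p / \lvert \log \rho_k \rvert$. I would exploit the universal identity $[[a_1 , a_2] , [a_3 , a_4]] = 1$ which holds in every metabelian group. Sampling $g_1 , g_2 , g_3 , g_4$ i.i.d.\ from $\mu_{S_n} ^{(2l)}$, the containment $g_i \in H_n$ for all $i$ implies $[[g_1 , g_2] , [g_3 , g_4]] = 1$ in $Q_n$, so
\begin{center}
$\mu_{S_n} ^{(2l)} (H_n) ^4 \leq \Pr \bigl( [[g_1 , g_2] , [g_3 , g_4]] = 1 \text{ in } Q_n \bigr)$.
\end{center}
Viewed as a word in the free generators $T$, the element $W := [[g_1 , g_2] , [g_3 , g_4]]$ has length at most $32 l$. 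Provided $32 l < C_1 n$, the girth hypothesis forces the event $\pi(W) = 1_{Q_n}$ to entail $W = 1_{F_k}$; but $F_k$ being free, this requires $[g_1 , g_2]$ and $[g_3 , g_4]$ to share a common cyclic root, since the centraliser of any non-trivial element of $F_k$ is infinite cyclic. The probability of this event can be bounded by $O ( l ^{O(1)} \rho_k ^{2l} )$ by applying Kesten's bound separately inside each cyclic subgroup of $F_k$. Taking fourth roots and setting $l = \lfloor C_1 n / 64 \rfloor$ yields $\mu_{S_n} ^{(2l)} (H_n) \leq \lvert Q_n \rvert ^{- \gamma}$ for some $\gamma > 0$ depending only on $C_1 , p , k$.

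The main technical obstacle will be the quantitative step of bounding $\Pr ( [g_3 , g_4] \in \langle z \rangle )$ uniformly in $z \in F_k$: one needs to count, for each cyclic $\langle z \rangle \leq F_k$, the pairs $(g_3 , g_4)$ with $g_3 g_4 g_3 ^{-1} g_4 ^{-1} \in \langle z \rangle$, using that fixing $g_4$ restricts $g_3$ to a union of cosets of the centraliser of $g_4$. Once such bounds are in place, the three constants $C_2 , C_3 , \gamma$ can be extracted explicitly in terms of $C_1 , p , k$ and $\rho_k$.
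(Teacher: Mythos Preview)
Your proposal is correct and follows the same overall strategy as the paper: split into the three cases of Proposition~\ref{subgroupclassification}, dispatch the bounded ($H_n\leq S_5$) and subfield cases with the trivial bound $\mu_{S_n}^{(2l)}(H_n)\leq|H_n|\,\rho_k^{2l}$ under the girth hypothesis, and handle the metabelian case via the law $[[a,b],[c,d]]=1$ together with the fact that centralisers of non-trivial elements in $F_k$ are cyclic. The only difference is packaging of the metabelian step. The paper (Lemma~\ref{noncontypes}(i)) bounds $|H_n\cap B_{S_n}(2l)|$ \emph{directly}: lifting to $Y=B_X(2l)\cap\theta^{-1}(H_n)$, all commutators $[a,b]$ with $a,b\in Y$ pairwise commute in $F_k$ and hence lie in a single cyclic group, whence $|Y|\leq(16l+1)(4l+1)$ and $\mu_{S_n}^{(2l)}(H_n)\ll l^2\rho_k^{2l}$. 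Your fourth-moment reformulation arrives at the same combinatorial count (your ``main technical obstacle'' of bounding $\Pr([g_3,g_4]\in\langle z\rangle)$ is resolved exactly by the coset-of-$C_{F_k}(g_4)$ argument you describe, giving $O(l^2\rho_k^{2l})$ for the probability), but after taking fourth roots you lose a factor of $4$ in the exponent. This is harmless for the Proposition, though the paper's direct cardinality bound is slightly sharper and avoids the extra step.
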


The relevant case of Proposition \ref{nonconpropn} follows immediately from 
Proposition \ref{nonconsubgrps1} and the following Lemma: 

\begin{lem} \label{girthlem}
let $T$ be as in Proposition \ref{nonconpropn} and $f \in \mathbb{F}_p [t]$ be of degree $n$. Then: 
\begin{center}
$\girth(SL_2 (\mathbb{F}_p [t]/(f)) , \pi_f (T)) \geq n / \tilde{D}$. 
\end{center}
\end{lem}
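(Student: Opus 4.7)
The plan is to turn the girth question into a degree bound on polynomial entries. Consider any non-trivial reduced word $w$ in $T$ of length $l$ which evaluates to the identity in $SL_2(\mathbb{F}_p[t]/(f))$. Two observations pin down $l$ from below.

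First, since $T$ freely generates a non-abelian free subgroup of $SL_2(\mathbb{F}_p[t])$, the word $w$ is not the identity matrix over $\mathbb{F}_p[t]$. Therefore at least one entry of the matrix $w - I$ is a non-zero polynomial in $\mathbb{F}_p[t]$. The assumption that $w$ reduces to $I$ modulo $f$ means $f$ divides every entry of $w - I$, so this non-zero entry has degree at least $n = \deg(f)$.

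Second, I would bound the degree of the entries of $w$ from above. If $A$ and $B$ are $2 \times 2$ matrices over $\mathbb{F}_p[t]$ with entries of degree at most $d(A)$ and $d(B)$ respectively, then the formula $(AB)_{ij} = \sum_k A_{ik} B_{kj}$ shows $d(AB) \leq d(A) + d(B)$. A trivial induction on $l$ then gives $d(w) \leq l \tilde{D}$, since each factor of $w$ is a generator with entries of degree at most $\tilde{D}$. For $l \geq 1$, the entries of $I$ contribute nothing to the degree, so we also have $d(w - I) \leq l \tilde{D}$.

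Combining the two observations gives $n \leq l \tilde{D}$, i.e. $l \geq n/\tilde{D}$, which is precisely the desired girth bound. There is no real obstacle here; the statement is essentially an immediate consequence of the fact that matrix multiplication is additive on the degrees of polynomial entries, together with injectivity of $\pi_f$ on the free subgroup $\langle T \rangle$ up to the degree threshold $n$.
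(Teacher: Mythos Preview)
Your proof is correct and follows essentially the same approach as the paper: bound the entry degrees of a length-$l$ word by $l\tilde{D}$, observe that a non-trivial word mapping to the identity modulo $f$ must have an entry (of $w-I$) divisible by $f$, and compare. The paper's argument is the same up to phrasing (it works with $w \in I_2 + f\cdot \mathbb{M}_2(\mathbb{F}_p[t])$ rather than $w-I$, and is slightly terser about the degree bound under multiplication).
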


\begin{proof}
Let $w$ be a non-trivial reduced word in $T$ of length $l$. 
Every entry of every element of $T$ has degree at most $\tilde{D}$, 
so every entry of $w$ has degree at most $\tilde{D} l$. Now suppose $\pi_f (w) = 1$, 
so that $w \in (I_2 + f \cdot \mathbb{M}_2 (\mathbb{F}_p [t])) \setminus \lbrace I_2 \rbrace$. 
Then at least one entry of $w$ has degree at least $n$, so $l \geq n / \tilde{D}$, as required. 
\end{proof}

Given the discussion in Section \ref{freereduxsection}, 
Proposition \ref{nonconsubgrps1} also immediately implies Theorem \ref{girthexpanders}. 

\begin{proof}[Proof of Theorem \ref{girthexpanders}]
As in the proof of Theorem \ref{mainthm}, we apply Theorem \ref{BGMachine}. 
Hypothesis (i) is Theorem \ref{quasirandomness}; hypothesis (ii) is \cite{Dinai} 
and hypothesis (iii) follows from Proposition \ref{nonconsubgrps1} and Remark \ref{weakernonconrmrk}. 
\end{proof}

We now turn to the proof of Proposition \ref{nonconsubgrps1}. 
Once again we exploit the classification of subgroups of $Q_n$. 

Informally, in all cases, the girth hypothesis and Kesten's Theorem will 
reduce the problem of bounding $\mu_{S_n} ^{(2l)} (H_n)$ 
to providing an upper bound for $\lvert H_n \cap B_{S_n} (2l) \rvert$. 
For $H_n \leq S_4$ or $A_5$ this is immediate. 
The admissibility hypothesis on $n$ will guarantee that 
any proper subfield subgroup is too small to fill $\lvert B_{S_n} (2l) \rvert$. 
Non-concentration in metabelian subgroups will be achieved by the same combinatorial argument 
as was used for the corresponding case in \cite{BoGa1}: 
a metabelian group satisfies a short group law, 
so that if $\lvert H_n \cap B_{S_n} (2l) \rvert$ is large, 
there will be many short relations between the elements of $S_n$. 
However the girth hypothesis guarantees that this will not happen. 

First we recall: 

\begin{thm}[Kesten]
Let $X$ be a finite set. Then there exists  \linebreak$C_4 (\lvert X \rvert) > 0$ such that $\mu_X \in l^2 (F(X))$ satisfies: 
\begin{center}
$\mu_X ^{(2l)} (g) \ll_{\lvert X \rvert} e^{- C_4 l}$ 
\end{center}
for all $g \in F(X)$. 
\end{thm}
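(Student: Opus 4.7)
The plan is to reduce the bound to the operator norm of the convolution operator associated to simple random walk on the free group, and then to invoke Kesten's classical spectral radius estimate for this walk.

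Let $A_X : l^2(F(X)) \to l^2(F(X))$ denote the convolution operator $A_X f = \mu_X \ast f$; since $\mu_X$ is symmetric, $A_X$ is self-adjoint, with some operator norm $\rho = \|A_X\|_{B(l^2(F(X)))} \leq 1$. A direct induction in $l$ shows that $A_X^{l} \delta_e = \mu_X^{(l)}$, viewed as an element of $l^2(F(X))$. Pairing with $\delta_g$ and applying Cauchy--Schwarz therefore yields, for every $g \in F(X)$,
$$\mu_X^{(2l)}(g) \;=\; \langle A_X^{2l} \delta_e, \delta_g \rangle \;\leq\; \|A_X^{2l} \delta_e\|_2 \, \|\delta_g\|_2 \;\leq\; \rho^{2l}.$$
Consequently, setting $C_4 = -2 \log \rho$, it suffices to show that $\rho < 1$ whenever $|X| \geq 2$.

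This last step is the content of Kesten's original theorem on the spectral radius of simple random walk on a non-abelian free group. One natural route is to identify $F(X)$ with the vertex set of the $2|X|$-regular tree $\mathcal{T}$, so that (after normalisation by $1 / 2|X|$) $A_X$ coincides with the adjacency operator of $\mathcal{T}$ acting on $l^2(\mathcal{T})$; a direct combinatorial count of closed non-backtracking walks of length $2l$ in $\mathcal{T}$, combined with the spectral radius formula $\rho = \lim_{l} \mu_X^{(2l)}(e)^{1/(2l)}$, gives the sharp value $\rho = \sqrt{2|X| - 1}/|X|$, which is strictly less than one for $|X| \geq 2$.

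The only nontrivial obstacle is the spectral radius computation itself, which is where all the work lies and which I would simply cite from Kesten's original paper (or, e.g., Woess, \emph{Random Walks on Infinite Graphs and Groups}). The remainder of the argument is the elementary Cauchy--Schwarz bound above, and no finer information on $\rho$ than strict sub-unitarity is required for the stated conclusion.
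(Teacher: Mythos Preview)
The paper does not supply its own proof of this statement: it is simply quoted as Kesten's classical theorem and used as a black box in the subsequent non-concentration arguments. Your proposal is a correct and standard derivation of the result, reducing the pointwise bound to the operator norm of $A_X$ on $l^2(F(X))$ via Cauchy--Schwarz, and then citing Kesten's spectral radius computation $\rho = \sqrt{2\lvert X \rvert - 1}/\lvert X \rvert$ for the $2\lvert X \rvert$-regular tree. This is exactly the kind of argument one would give if asked to justify the citation, and nothing more is needed here; indeed the paper itself alludes to the value $\sqrt{8/9}$ (the case $\lvert X \rvert = 2$) in a later remark, which matches your formula. Your observation that the bound requires $\lvert X \rvert \geq 2$ is also correct and implicit in the paper's usage, where the free group in play is always non-abelian.
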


The technicalities of non-concentration in subgroups are contained in the following general Lemma. 

\begin{lem} \label{noncontypes}
Let $G$ be a finite group, $S \subseteq G$ symmetric with $\lvert S \rvert = 2k$ 
and let $C_1 > 0$ be such that $\girth(G,S) > C_1 \log \lvert G \rvert$. 
Let $C_4 (k) > 0$ be the constant from Kesten's Theorem. 
Let $H \lneq G$. Let $\gamma > 0$ and let $C_5 \in (0,C_1)$. 
Suppose $\lvert G \rvert$ is sufficiently large. 
\begin{itemize}
\item[(i)] Suppose $H$ is metabelian. 
Suppose $C_5 \log \lvert G \rvert \leq 2l \leq \frac{C_1}{32} \log \lvert G \rvert$. 
Suppose $\gamma < C_4 C_5 / 2$. Then $\mu_S ^{(2l)} (H) \leq \lvert G \rvert^{-\gamma}$. 

\item[(ii)] Let $C_6>0$ and suppose $\lvert H \rvert \leq C_6 \lvert G \rvert^{\frac{1}{C_2}}$
Suppose  \linebreak$C_5 \log \lvert G \rvert \leq 2l \leq C_1 \log \lvert G \rvert$. 
Suppose $\gamma + 1/C_2 < C_4 C_5 / 2$. Then $\mu_S ^{(2l)} (H) \leq \lvert G \rvert^{-\gamma}$. 
\end{itemize}
\end{lem}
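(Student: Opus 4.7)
Both parts rely on a common estimate: by Kesten's theorem combined with the girth hypothesis, the random walk density $\mu_S^{(2l)}(g)$ is uniformly small. Let $\pi : F_k \twoheadrightarrow G$ be the natural surjection from the free group on $k$ generators (identified with $S$). Since $\girth(G,S) > 2l$, the only reduced word of length $\leq 2l$ in $\ker \pi$ is the trivial one, so walks of length $2l$ on $G$ returning to $1_G$ correspond bijectively to walks of length $2l$ on $F_k$ returning to $1$, giving $\mu_S^{(2l)}(1_G) = \tilde\mu^{(2l)}(1) \ll e^{-C_4 l}$, where $\tilde\mu$ is the simple random walk on $F_k$. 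The Cauchy--Schwarz inequality, together with the symmetry of $S$, yields $\mu_S^{(2l)}(g) \leq \mu_S^{(2l)}(1_G)$ for every $g \in G$. Summing,
$$\mu_S^{(2l)}(H) = \sum_{g \in H \cap B_S(2l)} \mu_S^{(2l)}(g) \ll |H \cap B_S(2l)| \cdot e^{-C_4 l}.$$
It remains only to bound $|H \cap B_S(2l)|$.

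For (ii), this is immediate: $|H \cap B_S(2l)| \leq |H| \leq C_6 |G|^{1/C_2}$, so combining with $l \geq \tfrac{1}{2} C_5 \log|G|$ gives $\mu_S^{(2l)}(H) \ll |G|^{1/C_2 - C_4 C_5/2} \leq |G|^{-\gamma}$ for $|G|$ sufficiently large, using $\gamma + 1/C_2 < C_4 C_5/2$.

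For (i), the task is to bound $|H \cap B_S(2l)|$ polynomially in $l$, using the metabelian law $[[x_1,x_2],[x_3,x_4]] = 1$. Work with the preimage $\tilde H := \pi^{-1}(H) \cap B_{F_k}(2l)$, noting $|H \cap B_S(2l)| \leq |\tilde H|$. For any $w_1,\ldots,w_4 \in \tilde H$, the word $[[w_1,w_2],[w_3,w_4]]$ has unreduced length at most $32l \leq C_1 \log|G| < \girth(G,S)$, so its triviality modulo $\ker\pi$ forces triviality in $F_k$. A dichotomy then ensues: either all pairs in $\tilde H$ commute in $F_k$, in which case (centralizers in free groups being cyclic) $\tilde H$ lies in a single cyclic subgroup $\langle \alpha \rangle$ and $|\tilde H| \leq 4l+1$; or there exist $u,v \in \tilde H$ with $[u,v] = \alpha^c \neq 1$ for some primitive $\alpha$, and then the identity forces every $[w,w']$ ($w,w' \in \tilde H$) to commute with $\alpha^c$, so $[w,w'] \in \langle \alpha \rangle$. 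In this second case the map $w \mapsto [u,w]$ sends $\tilde H$ into $\langle \alpha \rangle \cap B_{F_k}(8l)$, a set of size $O(l)$, with fibers of the form $w \cdot \langle u_0 \rangle$ (where $u_0$ is the primitive root of $u$) contributing $O(l)$ elements of $B_{F_k}(2l)$ each; hence $|\tilde H| = O(l^2)$. The desired bound $\mu_S^{(2l)}(H) \ll l^2 e^{-C_4 l} \leq |G|^{-\gamma}$ follows under $\gamma < C_4 C_5/2$.

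The main subtlety lies in the structural analysis of case (i). The tempting shortcut, to declare $\langle \tilde H\rangle \leq F_k$ metabelian (and hence cyclic, since the only metabelian free groups are cyclic), fails because the identity is available only on the set $\tilde H$, not on arbitrary products of its elements. The explicit pair-by-pair analysis via the commutator map $[u,\cdot]$, exploiting cyclicity of centralizers in $F_k$ both for the image and for the fibers, is the required substitute.
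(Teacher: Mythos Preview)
Your proof is correct and follows essentially the same approach as the paper: both parts use the girth hypothesis to transfer Kesten's decay to $G$ and then bound $|H\cap B_S(2l)|$, with the metabelian case handled by pulling back to the free group, invoking the double-commutator law inside the ball of radius $32l$, and using cyclicity of centralisers to count via the map $w\mapsto [u,w]$. Your explicit dichotomy (all commutators trivial versus not) and the Cauchy--Schwarz step are minor organisational variations on the paper's argument, not a different method.
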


\begin{proof}
\begin{itemize}
\item[(i)] 
Define a homomorphism $\theta : F \rightarrow G$ 
from a non-abelian free group $F$ on free basis $X$, 
such that $\theta$ maps $X \cup X^{-1}$ bijectively onto $S$. 
Then $\theta$ maps $B_X(32l)$ bijectively onto $B_S (32l)$. 
\\ \\
Consider $Y = B_X(2l) \cap \theta^{-1} (H)$. 
Then for any $a,b,c,d \in Y$,  \linebreak$\theta([[a,b],[c,d]])=1$ (since $H$ is metabelian) 
so $[[a,b],[c,d]]=1$ (since $[[a,b],[c,d]] \in B_X(32l)$). 
\\ \\
Recall that the centraliser of every non-trivial element of a free group is cyclic. 
Hence there exists $x \in F$ such that for all $a,b \in Y$, 
\begin{equation} \label{freegrpcomms}
[a,b] \in Z:= \langle x \rangle \cap B_X (8l) 
\end{equation}
so that $\lvert Z \rvert \leq 16l+1$. 
Now for $a \in Y$ and $z \in Z$, define: 
\begin{center}
$W_{a,z} = \lbrace b \in Y : [a,b]=z \rbrace$. 
\end{center}
Then $W_{a,z}$ is contained in a single coset of the centraliser of $a$, 
and in $B_X(2l)$, so that $\lvert W_{a,z} \rvert \leq 4l+1$. 
Fix $a \in Y$. By (\ref{freegrpcomms}), 
\begin{center}
$Y \subseteq \bigcup_{z \in Z} W_{a,z}$. 
\end{center}
We conclude that: 
\begin{center}
$\lvert H \cap B_S (2l) \rvert \leq \lvert Y \rvert \leq (16l+1)(4l+1)$. 
\end{center}
By Kesten's Theorem and the girth hypothesis, 
\begin{center}
$\mu^{(2l)} _S (H) \ll_k e^{-C_4 l} \lvert H \cap B_S (2l) \rvert \ll l^2 e^{-C_4 l}$, 
\end{center}
so decays exponentially fast. 

\item[(ii)] 
Suppose (for a contradiction) that for some $C_5 \log \lvert G \rvert \leq 2l \leq C_1 \log \lvert G \rvert$, 
\begin{center}
$\lvert G \rvert^{-\gamma} < \mu_S ^{(2l)} (H) 
\ll_k e^{-C_4 l} \lvert H \rvert \leq C_6 e^{-C_4 l} \lvert G \rvert^{\frac{1}{C_2}}$. 
\end{center}
(the second inequality being by Kesten's Theorem and the girth hypothesis). Hence: 
\begin{center}
$\lvert G \rvert^{\frac{1}{C_2} + \gamma} \gg_{k,C_6} e^{C_4 l}$. 
\end{center}
But $e^{C_4 l} \geq \lvert G \rvert^{\frac{C_4 C_5}{2}}$, 
so choosing $C_2$ sufficiently large and $\gamma$ sufficiently small, 
we have the required contradiction. 
\end{itemize}
\end{proof}

\begin{proof}[Proof of Proposition \ref{nonconsubgrps1}]
Suppose $C_5 n \leq 2l \leq \frac{C_1}{32} n$, for some $C_5 \in (0,\frac{C_1}{32})$. 
We consider each case of Proposition \ref{subgroupclassification} separately: 
\begin{itemize}
\item[(i)] Suppose $H_n$ is metabelian. 
Choosing $\gamma \in (0, C_4 C_5 / 2)$, 
the required result follows from Lemma \ref{noncontypes} (i). 
\item[(ii)] If $H_n \leq S_5$, then $\lvert H_n \rvert \leq 120$, so $\mu^{(2l)} _{S_n} (H_n) \ll_k e^{-C_4 l}$ for any $2l \leq C_1 n$, by Kesten's Theorem and the girth hypothesis. \\
\item[(iii)] Suppose that there exists a proper subfield $\mathbb{E} < \mathbb{F}_{p^n}$ 
such that $H_n$ is contained in (some conjugate of) $SL_2 (\mathbb{E})$. 
Recall that there exists $m \mid n$ such that $\mathbb{E} = \mathbb{F}_{p^m}$. 
By the admissibility hypothesis, $m \leq n / C_2$ so:
\begin{center}
$\lvert H_n \rvert \leq \lvert Q_m \rvert \leq p^{3m} 
\leq (p^{3n})^{\frac{1}{C_2}} \ll_p \lvert Q_n \rvert^{\frac{1}{C_2}}$. 
\end{center}
Choosing $\gamma$ sufficiently small and $C_2$ sufficiently large, 
we may suppose $\gamma + 1/C_2 < C_4 C_5 / 2$, and the result follows from Lemma \ref{noncontypes} (ii). 
\end{itemize}
\end{proof}

\subsection{Escape from Subgroups: The General Case} \label{GennonconSection}

In this Section we complete the proof of Proposition \ref{nonconpropn}. 
The proof shall be very similar in spirit to that of the special case discussed in Section \ref{IrrednonconSection}: 
recall that there, Proposition \ref{subgroupclassification} 
guaranteed that every proper subgroup of $SL_2 (\mathbb{F}_p [t]/(f))$
was either metabelian (Case (i)) or small (Cases (ii) and (iii)), 
so fell within reach of Lemma \ref{noncontypes}. 
Something similar is true in general, but to apply Lemma \ref{noncontypes} 
we first need to use the product decomposition of $SL_2 (\mathbb{F}_p [t]/(f))$ from Lemma \ref{CRT}, 
and project down to either the factors on which the image of our proper subgroup is metabelian, 
or those on which it is small, depending on which make up the larger part of the product. 

Recall the notation of Section \ref{freereduxsection}: 
$f \in \mathbb{F}_p[t]$ is an $M$-admissible square-free polynomial 
with no two irreducible factors having the same degree. 
 \linebreak$G = SL_2 (\mathbb{F}_p [t]/(f))$ and $H \lneq G$. 
Let $p_1 , \ldots , p_N$ be the irreducible factors of $f$, 
of degrees $n_1 , \ldots , n_N$ respectively. Recall (Lemma \ref{CRT}) that: 
\begin{center}
$(\prod_{j=1} ^N \pi_{p_j}) : SL_2 (\mathbb{F}_p [t] / (f)) \rightarrow \prod_{j=1} ^N Q_{n_j}$ 
\end{center}
is an isomorphism. 

\begin{coroll} \label{subgrpsnotontofactors}
$\pi_{p_j} (H) \lneq Q_{n_j}$ for some $1 \leq j \leq N$. 
\end{coroll}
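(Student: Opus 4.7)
The plan is to prove the contrapositive: assuming $\pi_{p_j}(H) = Q_{n_j}$ for all $j \in \{1,\ldots,N\}$, I will deduce $H = G$, contradicting $H \lneq G$. Via Lemma \ref{CRT}, I identify $G$ with $\prod_{j=1}^N Q_{n_j}$, so the hypothesis says that $H$ is a subdirect product of the $Q_{n_j}$.

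I would proceed by induction on $N$. The base case $N=1$ is trivial. For the step, split the product as $G = A \times B$ with $A = Q_{n_1}$ and $B = \prod_{j \geq 2} Q_{n_j}$. Since $H$ surjects onto each $Q_{n_j}$, applying the inductive hypothesis to the image of $H$ in $B$ gives $\pi_B(H) = B$, while $\pi_A(H) = A$ by assumption. Thus $H \leq A \times B$ is a subdirect product with surjective projections, and Goursat's Lemma describes $H$ as the preimage of the graph of an isomorphism between a quotient $A/N_A$ and a quotient $B/N_B$ for some $N_A \trianglelefteq A$, $N_B \trianglelefteq B$. The claim reduces to showing that the only such isomorphism is between trivial quotients.

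To verify this, I would use the fact that the only normal subgroups of $Q_{n_1} = SL_2(p^{n_1})$ (for $p \geq 3$) are $1$, the centre $Z$ of order $2$, and $Q_{n_1}$ itself, so the only quotients of $A$ are $Q_{n_1}$, $PSL_2(p^{n_1})$, and the trivial group. A quotient of $B$ isomorphic to either of the first two must have $PSL_2(p^{n_1})$ as a composition factor. However, the composition factors of $B$ are $\mathbb{Z}/2\mathbb{Z}$ and the simple groups $PSL_2(p^{n_j})$ for $j \geq 2$; since the $n_j$ are pairwise distinct, the orders $\tfrac{1}{2}p^{n_j}(p^{2n_j}-1)$ are pairwise distinct, so $PSL_2(p^{n_1})$ is not isomorphic to any $PSL_2(p^{n_j})$ with $j \geq 2$. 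Hence no such common non-trivial quotient exists, forcing $N_A = A$ and $N_B = B$, whence $H = A \times B = G$.

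There is no substantial obstacle here; the argument is a routine application of Goursat's Lemma, and the distinctness hypothesis on the $n_j$ in Theorem \ref{mainthm}(ii) is precisely what is needed to make the composition factor comparison work. The only mild subtlety is that we work with quasisimple rather than simple factors, but this is handled by enumerating the three possible quotients of each $Q_{n_j}$.
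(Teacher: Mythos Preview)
Your proof is correct and follows essentially the same approach as the paper: induction on $N$, splitting off one factor, applying Goursat's Lemma to the resulting two-factor product, and using the distinctness of the $n_j$ to rule out $PSL_2(p^{n_1})$ as a composition factor of the complementary product. The only cosmetic difference is that you split off the first factor while the paper splits off the last, and you spell out the three possible quotients of $Q_{n_1}$ a bit more explicitly.
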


\begin{proof}
We proceed by induction on $N$ (the case $N=1$ being trivial). 
Suppose (for a contradiction) that the projections $\pi_{p_j}$ of $H$ to $Q_{n_j}$ are all surjective. 
Denote $F = \prod_{j=1} ^{N-1} Q_{n_j}$, so that by Lemma \ref{CRT}, $G \cong F \times Q_{n_N}$. Define: 
\begin{center}
$K_1 = \lbrace g \in F : (g,e) \in H \rbrace$, $K_2 = \lbrace g \in Q_{n_N} : (e,g) \in H \rbrace$. 
\end{center}
By induction the projections of $H$ to $F$ and $Q_{n_N}$ are surjective. 
By Goursat's Lemma, $K_1 \vartriangleleft F$, $K_2 \vartriangleleft Q_{n_N}$ and $F/K_1 \cong Q_{n_N} / K_2$. 
\\ \\
If $K_2 \neq Q_{n_N}$ then $F$ has $PSL_2 (p^{n_N})$ as a composition factor. 
But this is not the case, as the $n_j$ are all distinct. Hence $K_2 = Q_{n_N}$ and $K_1 = F$, so $H=G$. 
\end{proof}

Up to a reordering of the $p_i$, there exist $k,m,n \in \mathbb{N}$ with $k+m+n=N$ such that: 
\begin{itemize}
\item[(i)] $\pi_{p_i} (H) = Q_{n_i}$ for $1 \leq i \leq k$; 

\item[(ii)] $\pi_{p_i} (H)$ is metabelian for $k+1 \leq i \leq k+m$; 

\item[(iii)] $\pi_{p_i} (H) \lneq Q_{n_i}$ is not metabelian for $k+m+1 \leq i \leq N$. 

\end{itemize}

Let $C_2 , \gamma > 0$ be constants satisfying the conditions of Lemma \ref{noncontypes}. 
For $M$ sufficiently large, by the admissibility hypothesis and Proposition \ref{subgroupclassification}, 
$\lvert \pi_{p_i} (H) \rvert \leq \lvert Q_{n_i} \rvert^{\frac{1}{C_2}}$ for $k+m+1 \leq i \leq N$. 
Moreover by Corollary \ref{subgrpsnotontofactors}, at least one of $m,n$ is non-zero. 
\\ \\
Write $F_1 = \prod_{i=1} ^k p_i$, $F_2 = \prod_{i=k+1} ^{k+m} p_i$, $F_3 = \prod_{i=k+m+1} ^N p_i$, 
so that $f = F_1 \cdot F_2 \cdot F_3$. 
Applying Lemma \ref{CRT} 
with $f$ replaced by $F_1$, $F_2$, $F_3$ respectively, we have: 

\begin{lem}
\begin{itemize}
\item[(i)] $\pi_{F_1} (H) = \prod_{i=1} ^k SL_2 (p^{n_i})$. 
\item[(ii)] $\pi_{F_2} (H)$ is metabelian. 
\item[(iii)] $\lvert \pi_{F_3} (H) \rvert \leq \lvert \pi_{F_3} (G) \rvert^{\frac{1}{C_2}}$. 
\end{itemize}
\end{lem}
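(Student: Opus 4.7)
The plan is to reduce each of (i)--(iii) to the coordinate-wise information on the $\pi_{p_i}(H)$ by applying Lemma \ref{CRT} separately to $F_1$, $F_2$ and $F_3$. Each application identifies $SL_2(\mathbb{F}_p[t]/(F_j))$ with a product $\prod_i Q_{n_i}$ over the appropriate range of indices, and realises $\pi_{F_j}(H)$ as a subdirect product of the $\pi_{p_i}(H)$'s for $i$ in that range.

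Parts (ii) and (iii) are then immediate from closure properties. For (ii), $\pi_{F_2}(H)$ embeds in $\prod_{i=k+1}^{k+m} \pi_{p_i}(H)$; each factor on the right is metabelian by the definition of the partition, and both direct products and subgroups of metabelian groups are metabelian. For (iii), identically $\pi_{F_3}(H)$ embeds in $\prod_{i=k+m+1}^N \pi_{p_i}(H)$, so
\[
\lvert \pi_{F_3}(H) \rvert \leq \prod_{i=k+m+1}^N \lvert \pi_{p_i}(H) \rvert \leq \prod_{i=k+m+1}^N \lvert Q_{n_i} \rvert^{1/C_2} = \lvert \pi_{F_3}(G) \rvert^{1/C_2},
\]
where the middle inequality is the bound preceding the lemma and the final equality is Lemma \ref{CRT} applied to $F_3$.

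For (i), the plan is to invoke Corollary \ref{subgrpsnotontofactors} applied to the subproduct $\prod_{i=1}^k Q_{n_i}$ in place of the full product: by definition of the partition, every coordinate projection of $\pi_{F_1}(H) \leq \prod_{i=1}^k Q_{n_i}$ is surjective, and the contrapositive of the corollary then forces $\pi_{F_1}(H)$ to equal the whole product. The proof of Corollary \ref{subgrpsnotontofactors} is really a Goursat-plus-composition-factor argument that goes through verbatim for any product of $Q_{n_j}$'s with pairwise distinct $n_j$, since the only features used are CRT, the fact that $Q_{n_j}$ is perfect (so contributes no common abelian quotient), and the pairwise non-isomorphism of the simple groups $PSL_2(p^{n_j})$.

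No step here is a serious obstacle; the only thing to check is that Corollary \ref{subgrpsnotontofactors} really does apply to the truncated product over the first $k$ primes, which it plainly does for the reasons just listed.
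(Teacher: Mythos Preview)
Your proposal is correct and matches the paper's approach. The paper does not give an explicit proof of this lemma; it simply prefaces the statement with ``Applying Lemma~\ref{CRT} with $f$ replaced by $F_1$, $F_2$, $F_3$ respectively, we have:'' and leaves the reader to fill in exactly the details you have supplied. In particular, your observation that (i) requires the Goursat/composition-factor argument from the proof of Corollary~\ref{subgrpsnotontofactors} (applied to the truncated product $\prod_{i=1}^k Q_{n_i}$, which is legitimate since the $n_i$ are pairwise distinct) is the intended reading.
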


Finally, we are ready to complete: 

\begin{proof}[Proof of Proposition \ref{nonconpropn}]
$\lvert H \rvert \geq \lvert \pi_{F_1} (H) \rvert = \lvert \pi_{F_1} (G) \rvert$, so: 
\begin{center}
$\lvert G:H \rvert \leq \lvert G \rvert / \lvert \pi_{F_1} (G) \rvert 
= \lvert \pi_{F_2 F_3} (G) \rvert$. 
\end{center}

\begin{itemize}
\item[Case 1:] $\deg(F_2) \geq \deg(F_3)$: 

We have $\lvert \pi_{F_2 F_3}(G) \rvert^{\frac{1}{2}} 
\ll_p \lvert \pi_{F_2}(G) \rvert 
\leq \lvert \pi_{F_2 F_3}(G) \rvert$. 

By Lemma \ref{girthlem}, $\girth(\pi_{F_2} (G), S) \geq \frac{1}{\tilde{D}} \deg(F_2) \geq \frac{1}{3 \tilde{D} \log(p)} \log \lvert \pi_{F_2} (G) \rvert$, so that 
for $C_5 \log \lvert \pi_{F_2} (G) \rvert \leq 2l \leq \frac{1}{96 \tilde{D} \log(p)} \log \lvert \pi_{F_2} (G) \rvert$, 
by Lemma \ref{noncontypes} (i), 
\begin{center}
$\mu_S ^{(2l)} (H) \leq \mu_S ^{(2l)} (\pi_{F_2}(H)) 
\leq  \lvert \pi_{F_2}(G) \rvert^{-\gamma} 
\ll_p \lvert \pi_{F_2 F_3}(G) \rvert^{\frac{-\gamma}{2}}
\leq \lvert G:H \rvert^{\frac{-\gamma}{2}}$
\end{center}
and 
\begin{center}
$2l \leq \frac{1}{96 \tilde{D} \log(p)} \log \lvert \pi_{F_2} (G) \rvert \leq \frac{1}{96 \tilde{D} \log(p)} \log \lvert \pi_{F_2 F_3} (G) \rvert$. 
\end{center}

\item[Case 2:] $\deg(F_3) \geq \deg(F_2)$: 

We have $\lvert \pi_{F_2 F_3}(G) \rvert^{\frac{1}{2}} 
\ll_p \lvert \pi_{F_3}(G) \rvert \leq \lvert \pi_{F_2 F_3}(G) \rvert$. 

By Lemma \ref{girthlem}, 
$\girth(\pi_{F_3} (G), S) \geq \frac{1}{\tilde{D}} \deg(F_3) \geq \frac{1}{3 \tilde{D} \log(p)} \log \lvert \pi_{F_3} (G) \rvert$, 
so that for $C_5 \log \lvert \pi_{F_3} (G) \rvert \leq 2l \leq \frac{1}{3 \tilde{D} \log(p)} \log \lvert \pi_{F_3} (G) \rvert$, 
by Lemma \ref{noncontypes} (ii), 
\begin{center}
$\mu_S ^{(2l)} (H) \leq \mu_S ^{(2l)} (\pi_{F_3}(H)) 
\leq  \lvert \pi_{F_3}(G) \rvert^{-\gamma} 
\ll_p \lvert \pi_{F_2 F_3}(G) \rvert^{\frac{-\gamma}{2}}
\leq \lvert G:H \rvert^{\frac{-\gamma}{2}}$
\end{center}
and
\begin{center}
$2l \leq \frac{1}{3 \tilde{D} \log(p)} \log \lvert \pi_{F_3} (G) \rvert \leq \log \lvert \pi_{F_2 F_3} (G) \rvert$. 
\end{center}
\end{itemize}
The required result follows. 
\end{proof}

\section{Non-Concentration Results} \label{sieve}

\subsection{Two Different Sieves} \label{TwoSievesSection}

We start with a simple observation: 

\begin{lem} \label{simpleupperbound}
 Let $G$ be a discrete countable group; $H$ a finite group and $\phi : G \rightarrow H$ an epimorphism. 
 Let $\nu$ be a probability measure on $G$ and $X \subseteq G$. 
 Then $\nu (X) \leq (\phi \nu) (\phi(X)) \leq \lvert \phi (X) \rvert \cdot \max_{x \in X} (\phi \nu) (\phi(x))$. 
\end{lem}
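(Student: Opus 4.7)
The plan is to chain together two elementary observations about pushforward measures, and the argument should be essentially immediate once set up correctly.

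First I would prove the left inequality by noting that $X \subseteq \phi^{-1}(\phi(X))$, so monotonicity of $\nu$ gives
\[
\nu(X) \leq \nu(\phi^{-1}(\phi(X))) = (\phi\nu)(\phi(X)),
\]
where the equality is just the definition of the pushforward measure $\phi\nu$ on $H$.

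For the right inequality, since $H$ is finite, $\phi(X)$ is a finite subset of $H$, so I would decompose
\[
(\phi\nu)(\phi(X)) = \sum_{h \in \phi(X)} (\phi\nu)(\{h\}) \leq |\phi(X)| \cdot \max_{h \in \phi(X)} (\phi\nu)(\{h\}).
\]
Then I would observe that, by surjectivity of $\phi$ restricted to $X \to \phi(X)$, the maximum on the right equals $\max_{x \in X}(\phi\nu)(\phi(x))$, which completes the bound.

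There is no substantive obstacle here; the lemma is essentially a bookkeeping statement packaging the pushforward inequality $\nu(X) \leq (\phi\nu)(\phi(X))$ together with the trivial bound on a sum by its largest term times the number of summands. The only thing to be careful about is interpreting $(\phi\nu)(\phi(x))$ as the $\phi\nu$-measure of the singleton $\{\phi(x)\}$, which is the natural reading given $H$ is finite.
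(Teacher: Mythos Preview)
Your proof is correct and complete. The paper itself offers no proof of this lemma, presenting it simply as a ``simple observation''; your argument is exactly the natural one filling in the details, so there is nothing to add.
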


Though straightforward, this bound can be very useful: 
when $\nu = \mu_S ^{(l)}$, for $S \subseteq G$ symmetric, with $\phi (S)$ generating $H$, 
then for $l$ sufficiently large and even, $\phi \nu$ is almost uniform on $H$, so that: 
\begin{equation} \label{measuresizebound}
(\phi \nu) (\phi (X)) \ll \lvert \phi (X) \rvert / \lvert H \rvert. 
\end{equation}
Moreover if $(H , \phi (S))$ is a good expander, 
equidistribution occurs for $l$ sufficiently \emph{small} 
that (\ref{measuresizebound}) gives a non-trivial lower bound on the rate at which $\mu_S ^{(l)}$ 
escapes from $X$. 

The present section contains two different instantiations of this philosophy for the group $SL_2 (\mathbb{F}_p [t])$, taking $(H,\phi)$ to be one of the congruence quotients from Theorem \ref{mainthm}. 
In the first of these it shall be sufficient to consider congruences modulo irreducible polynomials. 
We define, for $G$ a countable discrete group and $\nu_1 , \ldots , \nu_r$ 
finitely supported probability measures on $G$, the product measure $\times_{i=1} ^r \nu_i$ on $G$ by: 
\begin{center}
$(\times_{i=1} ^r \nu_i)(X) = \sum_{x \in X} \prod_{i=1} ^r \nu_i (x)$, for $X \subseteq G$. 
\end{center}

\begin{propn} \label{smallsieve}
Let $S \subseteq SL_2 (\mathbb{F}_p [t])$, $M > 0$ be as in Theorem \ref{mainthm}; 
let $(n_i)_i$ be as in Example \ref{smallsubfieldsap} (ii) 
and let $f_i \in \mathbb{F}_p [t]$ be irreducible of degree $n_i$. 
Let $X \subseteq SL_2 (\mathbb{F}_p [t])^r$ and suppose there exists $\alpha , C > 0$ 
such that for all $i$ sufficiently large, 
\begin{center}
$\lvert \pi_{f_i} (X) \rvert / \lvert Q_{n_i} \rvert^r \leq C p^{- \alpha n_i}$. 
\end{center}
Then there exist $C_1 (C,r) , C_2 (\alpha,p,S) > 0$ such that for all $l \in \mathbb{N}$, 
\begin{center}
$(\times_{i=1} ^r \mu_S ^{(l)})(X) \leq C_1 e^{- C_2 l}$. 
\end{center}
\end{propn}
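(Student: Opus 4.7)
The plan is to combine the generic upper bound of Lemma \ref{simpleupperbound} (pushed through to the product group $SL_2(\mathbb{F}_p[t])^r$) with the equidistribution of the random walk on the congruence quotient coming from Theorem \ref{mainthm} and Lemma \ref{mixingbound}. The key flexibility is that Example \ref{smallsubfieldsap} (ii) supplies a sequence $(n_i)_i$ of $M$-admissible integers growing linearly in $i$, so for any sufficiently large $l$ we may select $n_i$ of size comparable to $l$, and use the congruence map at that single scale.

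First, extend $\pi_{f_i}$ coordinatewise to $\Pi_i := \pi_{f_i}^{\times r} : SL_2(\mathbb{F}_p[t])^r \to Q_{n_i}^r$. Since $(\times_{j=1}^r \mu_S^{(l)})$ pushes forward under $\Pi_i$ to the product measure $\times_{j=1}^r \mu_{\pi_{f_i}(S)}^{(l)}$, Lemma \ref{simpleupperbound} yields
\begin{equation*}
(\times_{j=1}^r \mu_S^{(l)})(X) \leq \lvert \pi_{f_i}(X) \rvert \cdot \max_{y \in Q_{n_i}^r} \prod_{j=1}^r \mu_{\pi_{f_i}(S)}^{(l)}(y_j).
\end{equation*}
By Theorem \ref{mainthm} (whose hypotheses are met because each $n_i$ is a prime $> M$, so $f_i$ is $M$-admissible), the family $(Q_{n_i}, \pi_{f_i}(S))_i$ is a two-sided $\epsilon$-expander for some fixed $\epsilon > 0$, and Lemma \ref{mixingbound} then gives
\begin{equation*}
\mu_{\pi_{f_i}(S)}^{(l)}(g) \leq \frac{1}{\lvert Q_{n_i}\rvert} + (1-\epsilon)^l
\end{equation*}
for every $g \in Q_{n_i}$.

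Next, I would fix a constant $\kappa > 0$ (depending only on $p$ and $\epsilon$) such that whenever $n_i \leq \kappa l$ we have $(1-\epsilon)^l \leq 1 / \lvert Q_{n_i} \rvert$; concretely, $\kappa = \log(1/(1-\epsilon)) / (3\log p)$ works, since $\lvert Q_{n_i} \rvert \asymp p^{3 n_i}$. Using that $(n_i)_i$ grows linearly (say $n_i = Ni+1$), for $l$ sufficiently large there exists $i = i(l)$ with $\kappa l / 2 \leq n_i \leq \kappa l$. Plugging this into the display above and into the bound from Lemma \ref{simpleupperbound}, I obtain
\begin{equation*}
(\times_{j=1}^r \mu_S^{(l)})(X) \leq \lvert \pi_{f_i}(X) \rvert \cdot \left(\frac{2}{\lvert Q_{n_i} \rvert}\right)^r = 2^r \cdot \frac{\lvert \pi_{f_i}(X) \rvert}{\lvert Q_{n_i}\rvert^r} \leq 2^r C p^{-\alpha n_i} \leq 2^r C p^{-\alpha \kappa l/2},
\end{equation*}
which is the required exponential decay; for the finitely many small $l$ not covered, we absorb them into the constant $C_1$.

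I do not foresee a significant obstacle: the argument is essentially a scale-matching between the mixing rate on $Q_{n_i}$ (which by expansion is $O((1-\epsilon)^l)$, independent of $i$) and the size of the image $\lvert \pi_{f_i}(X) \rvert / \lvert Q_{n_i} \rvert^r$ which decays as $p^{-\alpha n_i}$. The only subtlety is ensuring that an admissible $n_i$ of the right order exists for every large $l$, which is precisely what Example \ref{smallsubfieldsap} (ii) was set up to provide.
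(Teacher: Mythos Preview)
Your argument is correct and follows essentially the same route as the paper's own proof: push forward to $Q_{n_i}^r$, use the expander property (Theorem \ref{mainthm}) together with Lemma \ref{mixingbound} to bound each point mass by $2/\lvert Q_{n_i}\rvert$ once $l$ is large enough relative to $n_i$, then exploit the linear growth of $(n_i)_i$ to pick $n_i$ comparable to $l$ and apply Lemma \ref{simpleupperbound}. One small correction: the $n_i$ from Example \ref{smallsubfieldsap} (ii) are $M$-admissible but not in general prime, so your parenthetical ``each $n_i$ is a prime $> M$'' should simply read ``each $n_i$ is $M$-admissible''; this does not affect the argument.
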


\begin{proof}
By Theorem \ref{mainthm} and Lemma \ref{mixingbound}, 
there exists $c>0$ such that, for $i \geq i_0$, 
$l \geq c n_i$ and any $x \in Q_{n_i}$, 
$(\pi_{f_i} \mu_S ^{(l)}) (x) \leq 2 / \lvert Q_{n_i} \rvert$. 
Fix $\delta \in (0 , 1)$, so that for $l$ sufficiently large, 
$\exists i \geq i_0$ such that $l \geq c n_i \geq \delta l$. 
Then for $i$ sufficiently large, 
\begin{center}
$(\times_{j=1} ^r \mu_S ^{(l)})(X) \leq (\times_{j=1} ^r \pi_{f_i} \mu_S ^{(l)})( \pi_{f_i}(X))$ \\
$\leq 2^r \lvert \pi_{f_i} (X) \rvert / \lvert Q_{n_i} \rvert^r$ (by Lemma \ref{simpleupperbound}) \\
$\leq 2^r C p^{- \alpha n_i}$ (by hypothesis) \\
$\leq 2^r C e^{- \frac{\alpha \delta \log(p) l}{c}} $
\end{center}
as required. 
\end{proof}

Proposition \ref{smallsieve} is very useful for proving escape of the random walk 
from such subsets as proper algebraic subvarieties, 
which have small image in congruence quotients, as we shall see. 
Indeed, we have already implicitly used a form of Proposition \ref{smallsieve} 
in the proof of Theorem \ref{mainthm}, to establish non-concentration in subgroups. 
However, Proposition \ref{smallsieve} is powerless in the face of subsets $X$ whose images modulo $f_i$  
are of order $\sim \gamma \lvert Q_{n_i} \rvert$, for $\gamma \in (0,1)$, say. 
This difficulty may be partially resolved by considering, 
instead of individual congruence quotients $Q_{n_i}$, 
large products $Q_{n_i} \times \ldots \times Q_{n_{i+k}}$. 
The image of $X$ in such a quotient will be of order 
$\sim \gamma^k \lvert Q_{n_i} \rvert \cdots \lvert Q_{n_{i+k}} \rvert$, 
so by allowing $k$ to grow and applying Theorem \ref{mainthm}, 
we may recover a good non-concentration estimate. 
As discussed in the Introduction, Theorem \ref{mainthm} 
is not powerful enough to retain exponentially fast escape from such $X$. 
However we still have:

\begin{propn} \label{bigsieve}
Let $S \subseteq SL_2 (\mathbb{F}_p [t])$, $M>0$ be as in Theorem \ref{mainthm}; 
let $(n_i)_i$ be the sequence of all primes greater than $M$ (arranged in ascending order) 
and let $f_i \in \mathbb{F}_p [t]$ be irreducible of degree $n_i$. 
Let $X \subseteq SL_2 (\mathbb{F}_p [t])^r$ and suppose there exists $\gamma \in (0,1)$ 
and $i_1 \in \mathbb{N}$ such that for all $i \geq i_1$, 
\begin{center}
$\lvert \pi_{f_i} (X) \rvert / \lvert Q_{n_i} \rvert^r \leq \gamma$. 
\end{center}
Then there exist $C_1 (r), C_2 (\gamma,p,S) > 0$ such that for all $l \in \mathbb{N}$, 
\begin{center}
$(\times_{i=1} ^r \mu_S ^{(l)})(X) \leq C_1 e^{- C_2 \sqrt{l/\log(l)}}$. 
\end{center}
\end{propn}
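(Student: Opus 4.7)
The strategy is essentially Bourgain's large sieve: we iterate Proposition \ref{smallsieve}'s setup over many primes at once, using the expansion of $SL_2(\mathbb{F}_p[t]/(f))$ modulo products of many irreducibles rather than a single one. Since the density bound $\gamma$ is only constant (rather than going to zero with $n_i$), we compensate by simultaneously reducing modulo $k$ distinct primes, gaining a factor of $\gamma^k$ in the image density; the trade-off is that our walk must run long enough to mix on the enlarged quotient.

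Fix $k$, and let $f = f_{i_1} \cdots f_{i_k}$ where $n_{i_1} < \cdots < n_{i_k}$ are the first $k$ admissible primes (all $> M$). This $f$ is square-free, its irreducible factors have prime degrees, and no two factors share a degree, so Theorem \ref{mainthm} (ii) applies and $(SL_2 (\mathbb{F}_p [t] / (f)), \pi_f(S))$ is a two-sided $\epsilon$-expander (for some fixed $\epsilon > 0$ once $k$ is large enough, say $n_{i_1} \geq i_0$). By Lemma \ref{CRT}, $\pi_f$ factors as $(\pi_{f_{i_1}}, \ldots , \pi_{f_{i_k}})$, so for each $x \in X$ its image in the product is determined coordinatewise, giving
\begin{equation*}
|\pi_f (X)| \;\leq\; \prod_{j=1}^k |\pi_{f_{i_j}} (X)| \;\leq\; \gamma^k \prod_{j=1}^k |Q_{n_{i_j}}|^r \;=\; \gamma^k \, |SL_2 (\mathbb{F}_p [t] / (f))|^r .
\end{equation*}
By Lemma \ref{mixingbound} applied to this expander, there is $c > 0$ such that for $l \geq c \log |SL_2 (\mathbb{F}_p [t] / (f))|$, the pushforward $\pi_f \mu_S^{(l)}$ is within a factor $2$ of uniform on $SL_2 (\mathbb{F}_p [t] / (f))$ pointwise. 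Exactly as in the proof of Proposition \ref{smallsieve}, Lemma \ref{simpleupperbound} (applied to the product epimorphism on $SL_2 (\mathbb{F}_p[t])^r$) then yields
\begin{equation*}
(\times_{j=1}^r \mu_S^{(l)})(X) \;\leq\; 2^r \, |\pi_f(X)| / |SL_2 (\mathbb{F}_p [t] / (f))|^r \;\leq\; 2^r \gamma^k.
\end{equation*}

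It remains to optimize $k$ as a function of $l$. By the prime number theorem, the $k$-th prime is $\asymp k \log k$, so $\sum_{j=1}^k n_{i_j} \asymp k^2 \log k$; consequently $\log |SL_2 (\mathbb{F}_p [t] / (f))| \asymp \log(p) \sum_j n_{i_j} \asymp k^2 \log k$. Choosing $k$ as the largest integer with $c \log |SL_2 (\mathbb{F}_p [t] / (f))| \leq l$, an easy calculation shows $k \gg \sqrt{l / \log l}$ for $l$ large. Plugging this into $2^r \gamma^k$ and absorbing constants yields the claimed bound $C_1 e^{-C_2 \sqrt{l/\log(l)}}$; for $l$ below the threshold at which this analysis begins, the inequality can be forced by enlarging $C_1$.

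The substantive point, and what distinguishes this from Proposition \ref{smallsieve}, is that we have to use infinitely many primes together. The main technical obstacle is the $k^2 \log k$ growth of the mixing time: because successive admissible primes grow, each new coordinate we add to the quotient costs linearly more in $l$ than the previous one, which is precisely why the exponent $\sqrt{l/\log l}$ appears rather than $l$. An exponential bound would require either admissible integers of bounded size (impossible in our setting) or a product theorem allowing repeated isomorphic factors, as discussed in the introduction.
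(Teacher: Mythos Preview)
Your argument is essentially the same as the paper's: take the product of the first $k$ irreducibles, apply Theorem \ref{mainthm} (ii) and Lemma \ref{mixingbound} to get $(\times_{j=1}^r \mu_S^{(l)})(X) \leq 2^r \gamma^k$ once $l$ exceeds a constant times $\sum_j n_{i_j} \asymp k^2\log k$, then optimise $k \asymp \sqrt{l/\log l}$. The only wrinkle is a notational clash: the statement already uses $i_1$ for the threshold above which the density bound $\lvert \pi_{f_i}(X)\rvert \leq \gamma \lvert Q_{n_i}\rvert^r$ holds, so your product should start at (or beyond) that index rather than at the first prime $> M$; the paper handles this by writing $g_i = \prod_{k=i_2}^{i_2+i-1} f_k$ with $i_2$ chosen large enough to clear both the density threshold and the $i_0$ from Theorem \ref{mainthm}.
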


\begin{proof}
Define $g_i = \prod_{k=i_2} ^{i_2 +i-1} f_k \in \mathbb{F}_p [t]$, with $i_2$ sufficiently large (to be determined). 
Then: 
\begin{center}
$\lvert  \pi_{g_i}(X) \rvert \leq \gamma^i \prod_{k=i_2} ^{i_2 +i-1} \lvert Q_{n_k} \rvert^r$
\end{center}
(provided $i_2 \geq i_1$). By Theorem \ref{mainthm}, there exists $c>0$ such that, 
provided $i_2$ is sufficiently large, for $l \geq c \sum_{k=i_2} ^{i_2 +i-1} n_k$ 
and for any $g \in SL_2 (\mathbb{F}_p [t] / (g_i))$, 
\begin{center}
$(\pi_{g_i} \mu_S ^{(l)})(g) \leq 2 / \prod_{k=i_2} ^{i_2 +i-1} \lvert Q_{n_k} \rvert$. 
\end{center}
For such $l$, 
\begin{center}
$(\times_{j=1} ^r \mu_S ^{(l)})(X) \leq (\times_{j=1} ^r \pi_{g_i} \mu_S ^{(l)})( \pi_{g_i}(X))$
\\
$\leq \lvert \pi_{g_i}(X) \rvert (2 / \prod_{k=i_2} ^{i_2 +i-1} \lvert Q_{n_k} \rvert)^r $
\\
$\leq 2^r \gamma^i$. 
\end{center}
Recalling that $n_k$ is of the order of $k \log (k)$, we have $\sum_{k=i_2} ^{i_2 +i-1} n_k \asymp i^2 \log(i)$. 
Choosing $i \asymp \sqrt{l/\log(l)}$, $l \gg i^2 \log(i)$ and the result follows. 
\end{proof}

\subsection{Escape from Subvarieties} \label{escape}

We are now ready to prove Theorem \ref{sieveupstairs}. In view of Proposition \ref{smallsieve}, 
it will suffice to bound the size of projections of subvarieties to congruence quotients. We use: 

\begin{thm}[Schwarz-Zippel \cite{LangWeil}] \label{schwarzzippel}
Let $\mathbb{F}$ be a finite field; $\overline{\mathbb{F}}$ be its algebraic closure. 
Let $V$ be an affine algebraic subvariety of $\mathbb{F}^d$, 
defined by $A$ polynomials in $\overline{\mathbb{F}} [x_1 , \ldots , x_d]$, 
each of total degree at most $B$. 
Then $\lvert V \rvert \ll_{A,B,d} \lvert \mathbb{F} \rvert^{\dim(V)}$. 
\end{thm}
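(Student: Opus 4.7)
The plan is to combine the classical single-polynomial Schwarz--Zippel lemma with a dimension induction. First I would establish the hypersurface case: for a nonzero $f \in \overline{\mathbb{F}}[x_1, \ldots, x_d]$ of total degree at most $B$, one has $|\{a \in \mathbb{F}^d : f(a) = 0\}| \leq B \cdot |\mathbb{F}|^{d-1}$. This follows by induction on $d$: writing $f = \sum_{j=0}^{k} g_j(x_2, \ldots, x_d) x_1^j$ with top nonzero coefficient $g_k$ of degree $\leq B - k$, any zero $(a_1, \ldots, a_d)$ either satisfies $g_k(a_2, \ldots, a_d) = 0$ (contributing $\leq (B-k)|\mathbb{F}|^{d-1}$ zeros by induction) or has $a_1$ among the $\leq k$ roots of the nonzero univariate polynomial $f(x_1, a_2, \ldots, a_d)$ (contributing $\leq k \cdot |\mathbb{F}|^{d-1}$).

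Second, I would reduce the general statement to the case of a single geometrically irreducible variety. Decomposing $V$ over $\overline{\mathbb{F}}$ into irreducible components, an effective Bezout-type bound controls both the number of components and their degrees in terms of $A, B, d$. So it suffices to prove $|W(\mathbb{F})| \ll_{A,B,d} |\mathbb{F}|^e$ for a single irreducible $W$ of dimension $e$ and bounded degree.

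For an irreducible $W$, I would apply Noether normalization: after a generic linear change of coordinates, the projection $\pi : W \to \mathbb{A}^e$ onto the first $e$ coordinates is a finite surjective morphism of degree $D$, with $D$ bounded in terms of the degree of $W$ (hence in terms of $A, B, d$). Each point of $\mathbb{A}^e(\mathbb{F})$ has at most $D$ preimages in $W$, yielding $|W(\mathbb{F})| \leq D \cdot |\mathbb{F}|^e$. Summing over components gives the claimed bound.

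The main obstacle will be ensuring that all the constants---the number of $\overline{\mathbb{F}}$-irreducible components, their individual degrees, and the degree of the Noether normalization---depend only on $A, B, d$ and not on the specific field $\mathbb{F}$. This requires effective versions of Bezout's theorem and of Noether normalization uniform in the base field; in positive characteristic one must also be a little careful about possible inseparability of $\pi$, though the point-counting bound is insensitive to this. An alternative route, avoiding Noether normalization entirely, would be to cite the Lang--Weil estimates directly: for each irreducible component $W_i$ of dimension $e_i$ defined over an extension of $\mathbb{F}$, $|W_i(\mathbb{F})| = O(|\mathbb{F}|^{e_i})$ with implied constant depending only on the degree of $W_i$, and summing over components yields the result.
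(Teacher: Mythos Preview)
The paper does not prove this theorem at all: it is stated with a citation to Lang--Weil and used as a black box. Your outline is therefore strictly more than what the paper supplies, and indeed your closing remark---that one could ``cite the Lang--Weil estimates directly''---is precisely the route the paper takes.

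That said, your sketch is a correct and standard way to establish the bound independently. The single-polynomial Schwarz--Zippel step is fine. For the general case, your two-stage plan (decompose into $\overline{\mathbb{F}}$-irreducible components with effective Bezout, then Noether-normalize each component) works, and you have correctly identified the only real issue: uniformity of the constants in $A,B,d$ across all finite fields. Effective Bezout gives the number and degrees of components bounded by $B^d$ (or similar), and an effective Noether normalization with degree bounded by the degree of the component is available in the literature; inseparability is indeed harmless for the point count since a finite morphism of degree $D$ has fibres of cardinality at most $D$ regardless. So there is no genuine gap, only the need to invoke these effective results carefully if you want a self-contained argument rather than a citation.
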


\begin{proof}[Proof of Theorem \ref{sieveupstairs}]
$SL_2 ^r$ is irreducible of dimension $3r$, so by Theorem \ref{schwarzzippel}, 
\begin{center}
$\lvert \pi_{f_i}(V(F)) \rvert \ll_F p^{(3r-1)n_i} \ll p^{-n_i} \lvert Q_{n_i} \rvert^r$. 
\end{center}
The result now follows from Proposition \ref{smallsieve}. 
\end{proof}

\begin{ex} \label{subvarietyexamples}
Under the hypotheses of Theorem \ref{sieveupstairs}: 
\begin{itemize}
\item[(i)] Zero entries are rare: let $F_1 : \mathbb{M}_2 (\mathbb{F}_p [t]) \rightarrow \mathbb{F}_p [t]$ 
be given by  \linebreak$F_1 \left( \begin{array}{cc} a & b \\ c & d \end{array} \right) = abcd$. 
Then there exist $C_1 , C_2 > 0$ such that: 
\begin{center}
$\mu_S ^{(l)} (\lbrace g \in SL_2 (\mathbb{F}_p [t]) : g \text{ has a zero entry} \rbrace) = 
\mu_S ^{(l)} (V(F_1)) \leq C_1 e^{-C_2 l}$. 
\end{center}

\item[(ii)] Matrices with a particular trace are rare: 
fix $\alpha \in \mathbb{F}_p [t]$ and let  \linebreak$F_{\alpha} : \mathbb{M}_2 (\mathbb{F}_p [t]) \rightarrow \mathbb{F}_p [t]$ 
be given by $F_{\alpha} (A) = \tr(A)-\alpha$. Then there exist $C_1 , C_2 > 0$ such that: 
\begin{center}
$\mu_S ^{(l)} (\lbrace g \in SL_2 (\mathbb{F}_p [t]) : \tr(g)=\alpha \rbrace) = 
\mu_S ^{(l)} (V(F_{\alpha})) \leq C_1 e^{-C_2 l}$. 
\end{center}

\item[(iii)] Torsion elements are rare: 
Let $g \in SL_2 (\mathbb{F}_p [t])$. Conjugate $g$, possibly over a quadratic extension, 
to an upper triangular matrix  \linebreak$\tilde{g} = \left( \begin{array}{cc} a & b \\ 0 & a^{-1} \end{array} \right)$. 
Suppose there exists $n \in \mathbb{N}$ such that $g^n = I_2$. 
Then $a^n = 1$. This is only possible if $a$ lies in a quadratic extension of $\mathbb{F}_p$. 
In particular $\tr(g) \in \mathbb{F}_p$, so $g$ satisfies one of the bounded set of polynomials 
$F_{\alpha}$ as in (ii) above, for $\alpha \in \mathbb{F}_p$. 
Hence there exist $C_1 , C_2 > 0$ such that: 
\begin{center}
$\mu_S ^{(l)} (\lbrace g \in SL_2 (\mathbb{F}_p [t]) : g \text{ has finite order} \rbrace)
\leq \sum_{\alpha \in \mathbb{F}_p} \mu_S ^{(l)} (V(F_{\alpha})) \leq C_1 e^{-C_2 l}$. 
\end{center}

\item[(iv)] Elements fixing a point in the adjoint representation are rare: 
Recall that $SL_2 (\mathbb{F}_p [t])$ acts linearly on $\mathfrak{sl}_2 (\mathbb{F}_p [t])$ by conjugation. 
Given $g \in SL_2 (\mathbb{F}_p [t])$, 
let $\Ad(g) \in GL_3 (\mathbb{F}_p [t])$ be the matrix of the associated linear transformation 
with respect to some (fixed) $\mathbb{F}_p [t]$-basis for $\mathfrak{sl}_2 (\mathbb{F}_p [t])$. 
\\ \\
Now recall that, given polynomials $F_1 (X) , F_2 (X)$ over some field $K$, 
there is a polynomial function $Res(F_1 (X) , F_2 (X))$ of their coefficients 
(defined over $\mathbb{Z}$ and depending only on the degrees of $F_1 , F_2$) 
which vanishes precisely when $F_1 , F_2$ have a common root in some extension of $K$. 
In particular, $F(g) = Res (\chi_{\Ad(g)}(X),X-1)$ is a polynomial in the entries of $g$ 
which vanishes precisely when $g$ has a non-zero fixed point in $\mathfrak{sl}_2 (\mathbb{F}_p [t])$. 
Moreover $F(g)$ does not vanish identically on $SL_2 (\mathbb{F}_p [t])$: 
$F \left( \begin{array}{cc} 1+t & 2+t \\ t & 1+t \end{array} \right) \neq 0$, for instance. 
We conclude that there exist $C_1 , C_2 > 0$ such that: 
\begin{center}
$\mu_S ^{(l)} (\lbrace g \in SL_2 (\mathbb{F}_p [t]) : \exists X \in \mathfrak{sl}_2 (\mathbb{F}_p [t]) \setminus \lbrace 0 \rbrace \text{ s.t. } X^g=X \rbrace) \leq C_1 e^{-C_2 l}$. 
\end{center}
\end{itemize}
\end{ex}

\subsection{Squares in $SL_2 (\mathbb{F}_p [t])$ are Rare}

In this section we prove Theorem \ref{squaresarerare}. 
Let $X \subseteq SL_2 (\mathbb{F}_p [t])$ be the set of squares. 
In light of Proposition \ref{bigsieve}, 
it suffices to bound the sizes of images $\pi_{f_i} (X)$. 
We note some elementary facts about $SL_2 (Q)$, for $Q$ an arbitrary odd prime power. 
Let $D (Q) \leq SL_2 (Q)$ be the subgroup of diagonal matrices. 
Recall that $D (Q)$ is cyclic of order $Q-1$. 

\begin{lem} \label{SL2basics}
Let $g \in D(Q)$ be non-central in $SL_2 (Q)$. Then:
\begin{itemize}
\item[(i)] $C_{SL_2 (Q)} (g) = D(Q)$. 
\item[(ii)] $\lvert \ccl_{SL_2 (Q)} (g) \cap D(Q) \rvert = 2$. 
\item[(iii)] If $g$ is a square in $SL_2 (Q)$ then it is a square in $D(Q)$. 
\end{itemize}
\end{lem}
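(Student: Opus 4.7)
The three parts of the lemma are elementary facts about the linear algebra of $SL_2(Q)$, and the plan is to treat them in sequence, using (i) as the key input to both (ii) and (iii).

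For (i), write $g = \mathrm{diag}(a, a^{-1})$. Non-centrality means $a \neq \pm 1$, so $a \neq a^{-1}$ and $g$ has two distinct eigenvalues $a, a^{-1}$ on the standard basis vectors $e_1, e_2$. Any $h \in SL_2(Q)$ commuting with $g$ must preserve each eigenspace; since the eigenspaces are the coordinate lines, $h$ must be diagonal, i.e.\ $h \in D(Q)$. The reverse inclusion $D(Q) \subseteq C_{SL_2(Q)}(g)$ is trivial since $D(Q)$ is abelian.

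For (ii), I would use the Weyl-group action. The element $w = \bigl(\begin{smallmatrix} 0 & 1 \\ -1 & 0 \end{smallmatrix}\bigr) \in SL_2(Q)$ satisfies $w g w^{-1} = \mathrm{diag}(a^{-1}, a) = g^{-1}$, so both $g$ and $g^{-1}$ lie in $\mathrm{ccl}_{SL_2(Q)}(g) \cap D(Q)$, and they are distinct because $a^2 \neq 1$. Conversely, if $g' \in D(Q) \cap \mathrm{ccl}_{SL_2(Q)}(g)$, then $g'$ is diagonal with the same eigenvalues as $g$, forcing $g' \in \{g, g^{-1}\}$. Thus the intersection has exactly two elements.

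For (iii), suppose $g = h^2$ for some $h \in SL_2(Q)$. Then $h$ commutes with $h^2 = g$, so by (i), $h \in C_{SL_2(Q)}(g) = D(Q)$, which exhibits $g$ as a square in $D(Q)$.

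No step looks like a serious obstacle; the only mild subtlety is checking in (ii) that the swap of eigenvalues can be realised by an element of $SL_2$ rather than just $GL_2$, which is handled by the explicit Weyl element $w$ above. The whole proof is just a careful unpacking of the diagonalisation of $g$ together with the fact that its eigenvalues are distinct.
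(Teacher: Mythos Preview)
Your proof is correct; each step is the standard argument and there are no gaps. The paper itself does not give a proof of this lemma, simply recording it as an elementary fact about $SL_2(Q)$, so your write-up would serve perfectly well as the omitted justification.
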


Now $2 \mid (Q-1)$, so the set of squares in $D(Q)$ is of order $\frac{Q-1}{2}$. 
 \linebreak$Z (SL_2 (Q)) = \lbrace \pm I_2 \rbrace$ consists of squares in $SL_2 (Q)$, 
so that by Lemma \ref{SL2basics} (iii), 
there is a subset $\lbrace g_i \rbrace_{i=1} ^{\frac{Q-1}{2}} \subseteq D(Q)$ 
consisting entirely of non-squares in $SL_2 (Q)$. 
If $g \in SL_2 (Q)$ is not a square, then $\ccl_{SL_2 (Q)} (g)$ consists entirely of non-squares, 
and by Lemma \ref{SL2basics} (i), $\lvert \ccl_{SL_2 (Q)} (g) \rvert = Q (Q + 1)$. Hence: 
\begin{center}
$\lvert \lbrace \text{non-squares in } SL_2 (Q) \rbrace \rvert 
\geq \lvert \bigcup_{i=1} ^{\frac{Q-1}{2}} \ccl_{SL_2 (Q)} (g_i) \rvert$
\\
$\geq \frac{1}{2} \sum_{i=1} ^{\frac{Q-1}{2}} \lvert \ccl_{SL_2 (Q)} (g_i) \rvert$ (by Lemma \ref{SL2basics} (ii))
\\
$\geq \frac{1}{4} (Q-1)Q(Q+1)$
\\
$ = \frac{1}{4} \lvert SL_2 (Q) \rvert$. 
\end{center}
Theorem \ref{squaresarerare} is now immediate from Proposition \ref{bigsieve}, taking $\gamma = \frac{3}{4}$. 

\subsection{Reducible Characteristic Polynomials in $SL_2 (\mathbb{F}_p [t])$ are Rare}

In this section we prove Theorem \ref{redcharpolysarerare}. 
Let $Y \subseteq SL_2 (\mathbb{F}_p [t])$ be the set of elements with reducible characteristic polynomial. 
Once again, we bound $\lvert \pi_{f_i} (Y) \rvert$. 
Let $g \in Y$ and let $f \in \mathbb{F}_p [t]$ be irreducible of degree $n$. 
Since $\chi_g \in \mathbb{F}_p [t] [X]$ splits over $\mathbb{F}_p [t]$, 
$\chi_{\pi_f (g)} \in \mathbb{F}_{p^n} [X]$ splits over $\mathbb{F}_{p^n}$. 
Let $Q$ be an arbitrary odd prime power. 
It will suffice to bound the set of elements $g \in SL_2 (Q)$ 
with reducible characteristic polynomial. We distinguish two cases and prove exponential decay in each:
\begin{itemize}
\item[Case 1:] $\tr(g) \neq \pm 2$. 
\\
$\chi_g$ does not have a repeated root, so is diagonalisable in $SL_2 (Q)$. 
Hence there exists non-central $h \in D(Q)$ such that $\ccl_{SL_2 (Q)} (g) = \ccl_{SL_2 (Q)} (h)$. 
There are $Q-3$ non-central elements $h \in D(Q)$, 
and each has conjugacy class in $SL_2 (Q)$ of order $Q(Q+1)$, 
by Lemma \ref{SL2basics} (i). 
Therefore the number of non-central diagonalisable elements $g$ is at most: 
\begin{center}
$\lvert \bigcup_{h \in D(Q) \setminus Z(SL_2 (Q))} \ccl_{SL_2 (Q)} (h) \rvert 
\leq \frac{1}{2} \sum_{h \in D(Q) \setminus Z(SL_2 (Q))} \lvert \ccl_{SL_2 (Q)} (h) \rvert$ 
\\
(by Lemma \ref{SL2basics} (ii))
\\
$\leq \frac{1}{2} (Q-3)Q(Q+1)$ 
\\
$\leq \frac{1}{2} \lvert SL_2 (Q) \rvert$. 
\end{center}

\item[Case 2:] $\tr(g) = \pm 2$ is immediate from Example \ref{subvarietyexamples}. 

\end{itemize}
Theorem \ref{redcharpolysarerare} follows from Proposition \ref{bigsieve}, with any $\gamma > \frac{1}{2}$.

\end{document}